\documentclass[11pt]{article}   	% use "amsart" instead of "article" for AMSLaTeX format
\usepackage{geometry}                		% See geometry.pdf to learn the layout options. There are lots.
\geometry{letterpaper}                   		% ... or a4paper or a5paper or ... 
\usepackage{graphicx}				% Use pdf, png, jpg, or eps§ with pdflatex; use eps in DVI mode
								% TeX will automatically convert eps --> pdf in pdflatex		
\usepackage{amsmath, amsthm, amssymb}

%SetFonts

%SetFonts
\newtheorem{thm}{Theorem}
\newtheorem{lem}[thm]{Lemma} 
\newtheorem{definition}[thm]{Definition}
\newtheorem{prop}[thm]{Proposition} 
\newtheorem{cor}[thm]{Corollary} 

\newtheorem{question}[thm]{Question}

\def\eps{\epsilon}

\def\N{\mathbb N}

\def\R{\mathbb R}

\def\cC{\mathcal C}
\def\cU{\mathcal U}
\def\cV{\mathcal V}

\title{Rado's paracompactness theorem for conformal manifolds}
\author{Michael Kapovich}
%\date{}							% Activate to display a given date or no date

\begin{document}
\maketitle
%\section{}
%\subsection{}

%53C18, 54D20

In this note, an ($n$-dimensional) manifold is a Hausdorff topological space equipped with a maximal smooth atlas with values in $\R^n$, 
no paracompactness is assumed. Recall that a {\em conformal structure} on a manifold $M$ is a reduction of the structure group of $TM$ from $GL(n,\R)$ to $CO(n)=\mathbb R_+\times O(n)$. Equivalently, a conformal structure is a collection of locally defined Riemannian metrics on $M$ which are conformal to each other. We will refer to such locally defined Riemannian metrics as {\em local conformal metrics} on $M$. Once we know that $M$ is paracompact, the structure group can be further reduced 
to $O(n)$ and local conformal metrics can be replaced by a single Riemannian metric on the entire $M$ inducing the conformal structure. 
A {\em conformal manifold} is a manifold equipped with a conformal structure. We will suppress the notation for a conformal structure and denote 
conformal manifolds by a single letter, e.g., $M$.  We will prove: 

\begin{thm}\label{thm:main}
Every conformal manifold $M$ of dimension $n\ge 2$ is paracompact.\footnote{Note that this result obviously fails for 1-dimensional manifolds.} 
\end{thm}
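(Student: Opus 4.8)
\noindent\emph{Strategy.} The plan is to reduce the statement to a countability property and then to extract that property from a single globally defined, conformally natural object, thereby bypassing the usual circularity (gluing local conformal metrics into one global metric is an $\R_{+}$-valued cohomological problem that is solvable only once paracompactness is already known).

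First I would reduce to the case that $M$ is connected: since $M$ is locally Euclidean it is locally connected, so its components are open and $M$ is their topological disjoint sum, each summand a connected conformal manifold, and a disjoint sum of paracompact spaces is paracompact. For a connected manifold, paracompact, metrizable, Lindel\"of, $\sigma$-compact and second countable all coincide, so it suffices to prove a connected conformal $M$ is second countable. I would also record two soft facts: (i) if a manifold $\mathcal G$ admits an absolute parallelism (a framing, equivalently a pointwise linear isomorphism $T\mathcal G\to \mathcal G\times\R^{N}$), then declaring the frame orthonormal yields a Riemannian metric, so $\mathcal G$ is metrizable; and (ii) an open continuous surjection carries a countable base to a base, hence preserves second countability. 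Thus it is enough to produce a conformally natural manifold $\mathcal G$, mapping onto $M$ by an open continuous surjection, that is manifestly metrizable.

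For $n\ge 3$ such a $\mathcal G$ is available essentially for free. The conformal structure determines a canonical principal bundle $\mathcal G\to M$, namely the bundle of the normal conformal Cartan geometry modelled on $S^{n}$, and the normal Cartan connection is a $\mathfrak g$-valued one-form on $\mathcal G$ that is a linear isomorphism on each tangent space, i.e.\ an absolute parallelism. By (i), $\mathcal G$ is metrizable; after the reduction to connected $M$ one checks that $\mathcal G$ has only finitely many components, each second countable, and that the bundle projection is an open continuous surjection, so (ii) gives that $M$ is second countable. The point is that this construction is canonical and uses \emph{no} partition of unity, which is exactly why it evades the circularity mentioned above.

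The main obstacle is $n=2$, where conformal structures are complex-analytically flexible and there is no canonical finite-dimensional Cartan connection; this case is precisely classical Rad\'o. To handle all $n\ge 2$ uniformly I would build the required object, or directly an exhaustion of $M$, from the one honestly conformal invariant available in every dimension: the conformal $n$-modulus. The transition functions $\lambda_{\alpha\beta}^{-1/2}$ of the local conformal metrics define a real line bundle $L$ on $M$ intrinsically (defining a bundle needs no paracompactness); for a nonnegative Borel section $\rho$ of $L$ the quantity ``$\rho\,ds$'' along curves is conformally well defined, and $\int_{M}\rho^{\,n}\,d\mathrm{vol}$ is conformally invariant, so $\mathrm{mod}_{n}(\Gamma)=\inf_{\rho}\int_{M}\rho^{\,n}$ over admissible $\rho$ is a conformal invariant of any curve family $\Gamma$. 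Using local (Loewner-type) modulus estimates, a bounded modulus-neighbourhood of a compact set meets only finitely many charts and is precompact, so one may attempt to exhaust $M$ by modulus-balls $K_{0}\subset K_{1}\subset\cdots$ about a basepoint. The hard part, which I expect to require the real work, is the \emph{globalization}: showing that these neighbourhoods do not stall, i.e.\ that $\bigcup_{i}K_{i}=M$ with each $K_{i}$ compact, thereby excluding long-ray (long-line-type) pathology. This is the step that must use a genuinely conformal, as opposed to merely smooth, structure, and it is also the natural place where the hypothesis $n\ge 2$ enters, in contrast with the $1$-dimensional counterexamples for which the modulus is trivial.
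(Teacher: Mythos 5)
Your reduction to the connected case and to second countability/metrizability is sound, and your instinct that the conformal $n$-modulus (equivalently, conformal capacity) is the right invariant in every dimension $n\ge 2$ matches the paper, which follows Ferrand in building a pseudometric $\mu_M(x,y)=\inf_C Cap_M(C)$ from the $n$-energy $\int_M|\nabla f|^n\,dV$. But your proposal stops exactly where the proof begins: the ``globalization'' step that you defer as ``the hard part'' \emph{is} the theorem. Concretely, three things are missing. First, positivity: you need a Loewner-type lower bound $Cap_M(C_1,C_2)>0$ for disjoint nondegenerate continua, and since the only available reference (Ferrand) proves this for paracompact manifolds, you must first show that any countable union of compacta in a connected manifold sits inside an open paracompact connected submanifold (the paper's Corollary~\ref{cor:CA2}) in order to transfer her estimates without circularity. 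Second, non-degeneracy can genuinely fail: on $S^n$ or $\R^n$ the capacity pseudometric is identically zero (Class~I in the paper's terminology), so ``modulus-balls about a basepoint'' need not be proper; the paper's fix is to delete a closed coordinate ball $\bar B$ and prove that $\mu_{M-\bar B}$ is an honest metric (Corollary~\ref{cor:C1}), then reassemble $M$ from two closed paracompact pieces. Third, one must verify that this metric induces the manifold topology (both inequalities), which uses the local estimate of Lemma~\ref{lem:L0} in one direction and a compactness argument on $\partial B$ in the other. Your unproved claim that ``a bounded modulus-neighbourhood of a compact set meets only finitely many charts and is precompact'' is essentially equivalent to all of this and cannot be taken for granted; it is precisely the kind of statement that fails for $n=1$ and for higher-dimensional complex manifolds, so nothing short of the quantitative capacity estimates will deliver it.

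The Cartan-geometry route for $n\ge 3$ is a genuinely different and attractive observation: the normal conformal Cartan connection is an absolute parallelism on the canonical bundle $\mathcal G\to M$, a parallelized connected manifold is metrizable via the induced Riemannian metric, and an open continuous surjection preserves second countability. If you verify that the prolongation construction of the normal Cartan connection is purely pointwise (no partitions of unity) and that each component of $\mathcal G$ surjects onto $M$, this gives a short proof for $n\ge 3$. What it cannot do is handle $n=2$, where no canonical finite-dimensional Cartan connection exists; that case is classical Rad\'o and is the heart of the matter. As written, your argument proves the theorem in no dimension: for $n\ge 3$ it is a plausible sketch resting on unverified machinery, and for $n=2$ it is an announcement of intent.
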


This result is known as {\em Rado's theorem} for $n=2$ and oriented conformal manifolds, equivalently, for Riemann surfaces 
(see e.g. \cite[\S 23]{Forster} for a proof using Perron's method). 
It is well-known that paracompactness fails for complex manifolds of complex dimension $\ge 2$ 
(examples are due to Calabi and Rosenlicht, \cite{CR}). 
The goal of this note is to show that Rado's theorem is actually a 
theorem of conformal, rather than complex, geometry.  

\begin{question}
What are other differential-geometric structures on manifolds which imply paracompactness? 
\end{question}

For instance, the existence of a symplectic structure is not an obstruction to paracompactness, as one can take, for instance, 
 the canonical symplectic form on the cotangent bundle of a non-paracompact manifold. 
 
 \medskip
{\bf Acknowledgements.} This work was motivated by a discussion of Perron's method and Rado's theorem with Bernhard Leeb. 

\section{Topological preliminaries}

A collection $\cC$ of subsets  of a topological space $X$ is said to be {\em locally finite} if every $x\in X$ has a neighborhood which intersects only 
finitely many members of $\cC$. 
A topological space $X$ is called {\em paracompact} if every open cover of $X$ admits a locally finite open refinement. (Unlike in the definition of 
compactness, a {\em refinement} cannot be replaced by a {\em subcover}.) This notion was introduced by Dieudonn\'e in \cite{D}. 

\begin{thm}
(See e.g. \cite[Theorem 5.1.3]{Engelking}.)  
Every metrizable topological space is paracompact. 
\end{thm}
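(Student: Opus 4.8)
The plan is to give Mary Ellen Rudin's elementary argument, which builds an explicit locally finite open refinement of an arbitrary open cover directly from the metric, using nothing beyond a well-ordering of the index set. Fix a metric $d$ inducing the topology and an open cover $\cU=\{U_\alpha\}_{\alpha\in A}$, and choose a well-ordering of $A$. I would then define, by induction on a scale parameter $n\in\N$, a doubly-indexed family of open sets $V_{n,\alpha}$: declare $V_{n,\alpha}$ to be the union of all open balls $B(x,2^{-n})$ over those centers $x$ satisfying three constraints at once — that $\alpha$ is the least index with $x\in U_\alpha$; that $x$ lies in no $V_{j,\beta}$ constructed at an earlier scale $j<n$; and that the enlarged ball $B(x,3\cdot 2^{-n})$ is still contained in $U_\alpha$. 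The induction on $n$ is legitimate because $V_{n,\alpha}$ refers only to sets $V_{j,\beta}$ with $j<n$.

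With the family $\{V_{n,\alpha}\}$ in hand, I would verify the defining properties of a paracompactness refinement in increasing order of difficulty. Refinement is immediate: the third constraint forces $B(x,2^{-n})\subseteq B(x,3\cdot 2^{-n})\subseteq U_\alpha$, hence $V_{n,\alpha}\subseteq U_\alpha$. Covering is short: given any point $p$, let $\alpha$ be the least index with $p\in U_\alpha$ and choose $n$ so large that $B(p,3\cdot 2^{-n})\subseteq U_\alpha$; then either $p$ already lies in some earlier $V_{j,\beta}$, or $p$ itself qualifies as a center at scale $n$, and in either case $p$ is covered.

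The main obstacle, and the heart of the theorem, is local finiteness, which I would establish through two quantitative separation estimates. The first is a same-scale estimate: for fixed $n$ and distinct indices $\alpha\neq\beta$, any $x\in V_{n,\alpha}$ and $y\in V_{n,\beta}$ satisfy $d(x,y)>2^{-n}$. This holds because, taking $\alpha<\beta$, the leastness constraint puts the center generating $y$ outside $U_\alpha\supseteq B(\cdot,3\cdot 2^{-n})$, forcing the two generating centers at least $3\cdot 2^{-n}$ apart, and the triangle inequality then absorbs the two radii $2^{-n}$. The second is a cross-scale estimate driven by the ``avoid earlier scales'' constraint: a center used at a scale $m>n$ must avoid every $V_{n,\alpha}$, which is exactly what will let a small ball miss all sufficiently fine scales.

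To finish, fix $p$; since the family covers $X$, pick $n,\alpha$ with $p\in V_{n,\alpha}$ and then $k\ge n$ with $B(p,2^{-k})\subseteq V_{n,\alpha}$, and take the witness neighborhood $W=B(p,2^{-k-1})$. The cross-scale estimate shows $W$ meets no $V_{m,\gamma}$ with $m\ge k+1$: a center at such a fine scale would lie within $2^{-k}$ of $p$, hence inside $V_{n,\alpha}$, which it is forbidden to enter. Meanwhile, for each of the finitely many remaining scales $m\le k$, the diameter of $W$ is strictly below $2^{-m}$, so the same-scale estimate forces $W$ to meet at most one $V_{m,\gamma}$. Thus $W$ meets at most $k$ members of the family, proving local finiteness. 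The delicate point throughout is the bookkeeping of the constants $2^{-n}$ versus $3\cdot 2^{-n}$: it is precisely the gap between ball radius and enlarged radius that makes both separation estimates close.
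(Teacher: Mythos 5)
Your proof is correct: it is Mary Ellen Rudin's well-known one-page proof of A.~H.~Stone's theorem, and all the quantitative details check out --- the same-scale separation $d(x,y)>2^{-n}$ follows from the leastness constraint exactly as you say (the generating centers are forced $3\cdot 2^{-n}$ apart and the triangle inequality absorbs the two radii $2^{-n}$), and in the final step the choice $k\ge n$ with $B(p,2^{-k})\subseteq V_{n,\alpha}$ makes the cross-scale exclusion airtight, since a center at scale $m\ge k+1$ lies within $2^{-k-1}+2^{-m}\le 2^{-k}$ of $p$ yet is barred from $V_{n,\alpha}$ by the ``avoid earlier scales'' clause. The paper, however, does not prove this statement at all: it is quoted as a black box with a citation to Engelking's textbook (Theorem 5.1.3 there), where the proof runs through different machinery --- one shows every open cover of a metrizable space admits a $\sigma$-discrete (hence $\sigma$-locally finite) open refinement, and then invokes the general fact that a regular space in which every open cover has a $\sigma$-locally finite open refinement is paracompact. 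So your route is genuinely different from the cited one: Rudin's argument is more elementary and entirely self-contained, constructing the locally finite refinement in one stroke, at the cost of invoking the well-ordering principle explicitly (which Stone's theorem requires in any case); Engelking's route is longer but yields the intermediate $\sigma$-discrete refinement property, which is independently useful (e.g.\ in the Bing--Nagata--Smirnov metrization theorems, including the Smirnov theorem this paper does use). One cosmetic point: your phrase ``the diameter of $W$ is strictly below $2^{-m}$'' should really be ``pairwise distances in $W$ are strictly below $2^{-k}\le 2^{-m}$,'' but since your same-scale estimate is strict, the argument closes either way.
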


This theorem has a ``converse'' of sorts:

\begin{thm}
 (Smirnov's theorem, see e.g. \cite[5.4.A]{Engelking}.) 
Every locally metrizable paracompact space is metrizable. 
\end{thm}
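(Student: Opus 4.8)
The plan is to derive Smirnov's theorem from the Nagata--Smirnov metrization theorem, which asserts that a regular Hausdorff space is metrizable if and only if it admits a $\sigma$-locally finite base, i.e.\ a base that is a countable union of locally finite families. Thus it suffices to verify two properties of a paracompact, locally metrizable space $X$: that $X$ is regular, and that $X$ carries a $\sigma$-locally finite base. Regularity is immediate, since $X$ is Hausdorff (paracompactness is taken to include the Hausdorff axiom) and every paracompact Hausdorff space is normal, hence regular.

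The construction of the base is the real content. First I would use local metrizability to choose an open cover of $X$ by metrizable subspaces, and then invoke paracompactness to pass to a locally finite open refinement $\{W_\alpha\}_{\alpha\in A}$; since an open subspace of a metrizable space is metrizable, each $W_\alpha$ is metrizable. Each metrizable $W_\alpha$ then possesses a $\sigma$-locally finite base $\mathcal{B}_\alpha=\bigcup_n \mathcal{B}_{\alpha,n}$ (this is the elementary direction of Nagata--Smirnov/Bing, valid in any metric space), where each family $\mathcal{B}_{\alpha,n}$ is locally finite in $W_\alpha$ and consists of sets open in $X$.

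The naive hope that $\bigcup_\alpha \mathcal{B}_{\alpha,n}$ is locally finite in $X$ for each fixed $n$ fails, because base elements of $W_\alpha$ may accumulate at a boundary point of $W_\alpha$ lying outside $W_\alpha$; controlling this accumulation is the step I expect to be the main obstacle. To handle it I would shrink the cover: by normality (the shrinking lemma for locally finite, hence point-finite, open covers of a normal space) there is an open cover $\{G_\alpha\}$ of $X$ with $\overline{G_\alpha}\subseteq W_\alpha$ for every $\alpha$. I would then retain only the base elements lying deep inside, setting $\mathcal{B}'_{\alpha,n}=\{B\in\mathcal{B}_{\alpha,n}:B\subseteq G_\alpha\}$. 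In a metric space the base elements contained in the open set $G_\alpha$ still form a base at each point of $G_\alpha$, so, as $\{G_\alpha\}$ covers $X$, the union $\mathcal{B}'=\bigcup_{\alpha,n}\mathcal{B}'_{\alpha,n}$ is a base for the topology of $X$.

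It remains to show that each $\mathcal{C}_n=\bigcup_\alpha \mathcal{B}'_{\alpha,n}$ is locally finite in $X$, which is exactly where the shrinking pays off. Given $x\in X$, local finiteness of $\{W_\alpha\}$ supplies a neighborhood meeting only finitely many $W_{\alpha_1},\dots,W_{\alpha_k}$, and the members of $\mathcal{B}'_{\alpha,n}$ for the remaining $\alpha$ are disjoint from it. For each $\alpha_j$ there are two cases: if $x\in W_{\alpha_j}$, then local finiteness of $\mathcal{B}_{\alpha_j,n}$ inside $W_{\alpha_j}$ provides a neighborhood of $x$ meeting only finitely many members of $\mathcal{B}'_{\alpha_j,n}\subseteq\mathcal{B}_{\alpha_j,n}$; if $x\notin W_{\alpha_j}$, then $x\notin\overline{G_{\alpha_j}}$, so $X\setminus\overline{G_{\alpha_j}}$ is a neighborhood of $x$ disjoint from every member of $\mathcal{B}'_{\alpha_j,n}$. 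Intersecting these finitely many neighborhoods shows $\mathcal{C}_n$ is locally finite. Hence $\mathcal{B}'=\bigcup_n\mathcal{C}_n$ is a $\sigma$-locally finite base, and Nagata--Smirnov yields that $X$ is metrizable.
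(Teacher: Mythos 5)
Your argument is correct. There is nothing in the paper to compare it with: the paper states Smirnov's theorem without proof, citing Engelking [5.4.A] (where it appears as an exercise), so your write-up in effect fills in that citation, and it does so with the standard argument. The chain of reductions is sound: paracompact Hausdorff implies normal, hence regular, so by Nagata--Smirnov it suffices to produce a $\sigma$-locally finite base; the locally finite metrizable refinement $\{W_\alpha\}$, the shrinking $\overline{G_\alpha}\subseteq W_\alpha$, and the truncation to $\mathcal{B}'_{\alpha,n}=\{B\in\mathcal{B}_{\alpha,n}: B\subseteq G_\alpha\}$ deal with exactly the boundary-accumulation problem you correctly identify as the crux, and your case analysis for local finiteness of $\mathcal{C}_n$ is complete (note the neighborhoods furnished by local finiteness of $\mathcal{B}_{\alpha_j,n}$ in $W_{\alpha_j}$ are open in $X$ because $W_{\alpha_j}$ is open). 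One small caveat on labeling: the ``elementary direction'' you invoke --- that every metric space has a $\sigma$-locally finite (indeed $\sigma$-discrete) base --- is not purely formal, since its usual proof runs through Stone's theorem that metric spaces are paracompact; but that theorem is itself quoted in the paper (Engelking, Theorem 5.1.3), so there is no circularity and the dependence is harmless.
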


In general, paracompactness is not a hereditary property. However:  

\begin{lem}
Let $X$ be a locally metrizable space (e.g. a manifold). Then every paracompact subset $Y$ of $X$ is hereditarily paracompact. 
\end{lem}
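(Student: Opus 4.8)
The plan is to use the two theorems quoted above to upgrade the paracompactness of $Y$ all the way to metrizability, after which hereditary paracompactness becomes automatic. The crucial preliminary observation is that local metrizability is inherited by arbitrary subspaces: if every point of $X$ has a metrizable neighborhood, then for $y\in Y$ a metrizable neighborhood $U$ of $y$ in $X$ meets $Y$ in the set $U\cap Y$, which, being a subspace of a metrizable space, is itself metrizable, and which is a neighborhood of $y$ in the subspace topology of $Y$. Hence $Y$ (and indeed every subspace of $X$) is locally metrizable.

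With this in hand, the first step is to apply Smirnov's theorem to $Y$: since $Y$ is both paracompact (by hypothesis) and locally metrizable (by the previous observation), Smirnov's theorem yields that $Y$ is in fact metrizable. The second step exploits the fact that metrizability, \emph{unlike} paracompactness, \emph{is} a hereditary property: any subset $Z\subseteq Y$, equipped with the restriction of a metric inducing the topology of $Y$, is metrizable. Finally, invoking the theorem that every metrizable space is paracompact, each such $Z$ is paracompact. Since $Z$ was an arbitrary subset of $Y$, this shows that $Y$ is hereditarily paracompact, completing the argument.

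I do not anticipate a genuine obstacle here; the entire content lies in recognizing that Smirnov's theorem converts the local, geometric hypothesis of metrizability into the global property of metrizability, which is the one property strong enough to be hereditary. The only points requiring care are the two verifications that local metrizability and metrizability each pass to subspaces, both of which are routine consequences of the definition of the subspace topology.
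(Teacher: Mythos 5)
Your proof is correct and is essentially identical to the paper's: both pass local metrizability to $Y$, apply Smirnov's theorem to get metrizability of $Y$, and then use that metrizability is hereditary and implies paracompactness. You merely spell out the routine verifications in more detail.
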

\begin{proof} Since $X$ is locally metrizable, so is $Y$, hence, by Smirnov's theorem, $Y$ is metrizable. 
%Every locally metrizable paracompact space is metrizable (Smirnov's theorem, see \cite[5.4.A]{Engelking}). 
This implies that every subset $Z\subset Y$ is also metrizable, hence, paracompact. 
\end{proof}

\begin{thm}
(See e.g. \cite[Theorem 5.1.34]{Engelking}.) Suppose that $X$ is a topological space which is a union of a 
locally finite family of closed paracompact subsets. Then $X$ is itself paracompact. 
\end{thm}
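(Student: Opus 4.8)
The plan is to reduce everything to a single standard characterization of paracompactness and to do the real work in verifying one local finiteness statement. Write $\{F_s\}_{s\in S}$ for the given locally finite family of closed, paracompact subsets with $X=\bigcup_{s}F_s$, and let $\cU$ be an arbitrary open cover of $X$; I must produce a locally finite refinement of $\cU$. I will not aim for an open refinement directly, since the natural candidate sets turn out to be relatively open in the $F_s$ but not open in $X$; instead I would invoke the classical fact that a regular space in which every open cover admits a locally finite refinement (by arbitrary subsets) is paracompact, this being one of the standard equivalent formulations of paracompactness (see \cite{Engelking}). Regularity of $X$ is available in the situation of interest, since manifolds are Hausdorff and locally Euclidean, hence locally compact Hausdorff, hence regular.

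First I would refine $\cU$ separately on each piece. For fixed $s$, the family $\{U\cap F_s : U\in\cU\}$ is an open cover of the paracompact subspace $F_s$, so it has a refinement $\cW_s$ that is open in $F_s$ and locally finite in $F_s$. Since $\cW_s$ refines $\{U\cap F_s\}$, each member of $\cW_s$ is contained in some member of $\cU$. Setting $\cW=\bigcup_{s\in S}\cW_s$, the family $\cW$ covers $X$ (because the $F_s$ do) and refines $\cU$. Note that each $W\in\cW_s$ satisfies $W\subseteq F_s$.

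The heart of the argument is to check that $\cW$ is locally finite in $X$, and this is where both hypotheses are used together. Fix $x\in X$. By local finiteness of $\{F_s\}$ choose an open $G\ni x$ meeting only finitely many pieces, say $F_{s_1},\dots,F_{s_k}$; since every member of $\cW_s$ lies in $F_s$, the set $G$ can meet members of $\cW_s$ only for $s\in\{s_1,\dots,s_k\}$. For each such $j$ I produce an open $O_j\ni x$ meeting only finitely many members of $\cW_{s_j}$: if $x\notin F_{s_j}$ take $O_j=X\setminus F_{s_j}$, which is open precisely because $F_{s_j}$ is closed and which meets no member of $\cW_{s_j}$; if $x\in F_{s_j}$, use local finiteness of $\cW_{s_j}$ in $F_{s_j}$ to get a relatively open $V_j\ni x$ meeting only finitely many members, write $V_j=O_j\cap F_{s_j}$ with $O_j$ open in $X$, and observe that for $W\subseteq F_{s_j}$ one has $O_j\cap W=V_j\cap W$, so $O_j$ too meets only finitely many members of $\cW_{s_j}$. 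Then $O=G\cap O_1\cap\cdots\cap O_k$ is an open neighborhood of $x$ meeting only finitely many members of $\cW$.

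With $\cW$ exhibited as a locally finite refinement of the arbitrary open cover $\cU$, the cited characterization yields that $X$ is paracompact, completing the proof. I expect the main obstacle to be exactly the interplay in the previous paragraph: local finiteness of each $\cW_s$ is a statement internal to $F_s$, and upgrading it to local finiteness in $X$ at points lying outside $F_s$ is what forces the use of closedness of the $F_s$ (to form the open set $X\setminus F_{s_j}$) rather than merely their paracompactness; local finiteness of the family $\{F_s\}$ is what keeps the number of pieces in play finite near each point.
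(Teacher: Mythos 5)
Your proof is correct, and since the paper offers no proof of this statement (it is quoted from Engelking's book and used only through Corollary \ref{cor:CA1}), there is nothing in the paper to compare it against except the standard textbook argument, which is essentially what you reproduce. The core of your verification is sound: refining $\cU$ inside each piece, using closedness of $F_{s_j}$ to discard via $X\setminus F_{s_j}$ the pieces not containing $x$, using local finiteness of $\{F_s\}$ to reduce to finitely many pieces near $x$, and the identity $O_j\cap W=V_j\cap W$ for $W\subseteq F_{s_j}$ to promote relative local finiteness of $\cW_{s_j}$ to local finiteness in $X$. The one genuine deviation from the statement as written is your appeal to Michael's characterization (a \emph{regular} space in which every open cover has a locally finite refinement by arbitrary sets is paracompact): this imports a regularity hypothesis that the statement, which speaks of an arbitrary topological space, does not contain. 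You are right that this is harmless in context, because the result is applied only to the manifold $M$, which is Hausdorff and locally Euclidean, hence regular; a proof in the stated generality would instead have to upgrade the relatively open refinements to a genuinely open locally finite refinement of $X$ (as Engelking does, exploiting normality of the closed paracompact pieces), but for the purposes of this paper your version suffices.
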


\begin{cor}\label{cor:CA1}
If $X$ is a union of finitely many closed paracompact subsets, then $X$ is paracompact. 
\end{cor}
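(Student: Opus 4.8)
The plan is to obtain this as an immediate specialization of the preceding theorem (\cite[Theorem 5.1.34]{Engelking}), which guarantees paracompactness of any space that is the union of a \emph{locally finite} family of closed paracompact subsets. The only gap to bridge is the passage from a finite family to a locally finite one, so the entire content of the argument is the observation that finiteness implies local finiteness.

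First I would record that observation precisely. Given a finite collection $\{A_1,\dots,A_k\}$ of subsets of $X$, I claim it is locally finite. Indeed, for every point $x\in X$ the whole space $X$ is a neighborhood of $x$, and $X$ meets at most the $k$ members $A_1,\dots,A_k$ of the collection; since $k<\infty$, the local finiteness condition is satisfied trivially at every point. Thus the finiteness of the family forces local finiteness, regardless of how the sets are arranged.

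With this in hand, the corollary follows in one line. Writing $X=A_1\cup\cdots\cup A_k$ with each $A_i$ closed and paracompact, the family $\{A_1,\dots,A_k\}$ is a locally finite family of closed paracompact subsets whose union is $X$, so the preceding theorem applies verbatim and yields that $X$ is paracompact. There is essentially no obstacle here: the corollary is the special case of the preceding theorem in which the indexing family happens to be finite, and the sole point worth stating is that a finite family automatically meets the local finiteness hypothesis.
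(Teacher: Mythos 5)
Your proof is correct and matches the paper's intent exactly: the corollary is stated without proof precisely because it is the special case of the preceding theorem obtained by noting that a finite family is automatically locally finite. Nothing further is needed.
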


Recall that a space is said to be $\sigma$-compact if it is a union of countably many compact subsets. The following theorem was first proven by  
Dieudonn\'e in \cite{D}. For a textbook reference, see \cite[Ch. I.10, Theorem 5]{Bourbaki}.

\begin{thm}\label{prop:PA1}
Suppose that $X$ is a locally compact Hausdorff $\sigma$-compact space. Then $X$ is paracompact. 
Conversely, a locally compact Hausdorff space $X$ is paracompact if and only if it is the coproduct of a family of (pairwise disjoint) subspaces $X_i, i\in J$, 
each of which is Hausdorff, locally compact and $\sigma$-compact. 
\end{thm}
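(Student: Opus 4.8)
The plan is to prove the two assertions separately, starting with the $\sigma$-compact case. First I would produce a \emph{normal exhaustion} of $X$: a nested sequence of relatively compact open sets $U_1\subset U_2\subset\cdots$ with $\overline{U_n}\subset U_{n+1}$ and $X=\bigcup_n U_n$. Writing $X=\bigcup_n K_n$ with each $K_n$ compact (by $\sigma$-compactness), I build the $U_n$ by induction: local compactness together with the Hausdorff property lets me cover any compact set by finitely many open sets with compact closures, so given $\overline{U_n}$ I can choose an open relatively compact $U_{n+1}$ with $\overline{U_n}\cup K_{n+1}\subset U_{n+1}$. Since $K_n\subset U_n$, the $U_n$ exhaust $X$.

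Next, setting $U_n=\emptyset$ for $n\le 0$ and given an arbitrary open cover $\cU$, I would refine it as follows. The shells $\overline{U_n}\setminus U_{n-1}$ are compact, and $W_n:=U_{n+1}\setminus\overline{U_{n-2}}$ is an open set containing the $n$-th shell. I cover each shell by finitely many sets of the form $U\cap W_n$ with $U\in\cU$, and collect these over all $n$; this gives an open refinement of $\cU$ covering $X$. Local finiteness holds because any point lying in $U_N$ has the neighborhood $U_N$, which is disjoint from $W_n$ for all $n\ge N+2$ (then $\overline{U_{n-2}}\supseteq U_N$), so $U_N$ meets only the finitely many refinement members associated to $W_n$ with $n\le N+1$. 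This proves the first assertion.

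For the converse, the easy direction is that if $X=\coprod_i X_i$ with each $X_i$ Hausdorff, locally compact and $\sigma$-compact, then each $X_i$ is paracompact by the first part, and a topological coproduct of paracompact spaces is paracompact: one refines a given open cover on each clopen summand, and the union is automatically locally finite because the summands are open and closed. In the nontrivial direction I assume $X$ locally compact, Hausdorff and paracompact. The relatively compact open sets form an open cover, so paracompactness yields a locally finite open refinement $\{V_\alpha\}$ with each $\overline{V_\alpha}$ compact. I would then introduce the intersection graph on the index set, declaring $\alpha,\beta$ adjacent when $V_\alpha\cap V_\beta\neq\emptyset$, and let the $X_i$ be the unions $\bigcup_{\alpha\in C}V_\alpha$ over the connected components $C$. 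Each $X_i$ is open, the $X_i$ are pairwise disjoint (overlapping members lie in the same component) and cover $X$, so each $X_i$ is also closed; hence $X=\coprod_i X_i$ and each $X_i$ is locally compact Hausdorff as a clopen subspace.

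The main obstacle is establishing $\sigma$-compactness of each $X_i$. Here the key observation is that, since $\{V_\alpha\}$ is locally finite and $\overline{V_\alpha}$ is compact, a finite subcover argument shows $\overline{V_\alpha}$ meets only finitely many $V_\beta$; thus the intersection graph is \emph{locally finite}. A connected component of a locally finite graph is countable (exhaust it by breadth-first search from any vertex), so each index set $C$ is countable. Because $X_i$ is closed and contains each $V_\alpha$, it contains each $\overline{V_\alpha}$, whence $X_i=\bigcup_{\alpha\in C}\overline{V_\alpha}$ is a countable union of compacta, i.e.\ $\sigma$-compact. This produces the required coproduct decomposition and completes the proof.
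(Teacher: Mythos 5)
The paper does not prove this theorem at all: it is quoted as a classical result of Dieudonn\'e with a pointer to \cite[Ch.~I.10, Theorem~5]{Bourbaki}, so there is no in-paper argument to compare against. Your proof is the standard one and is correct and complete: the exhaustion $U_1\subset U_2\subset\cdots$ with $\overline{U_n}\subset U_{n+1}$, the shells $\overline{U_n}\setminus U_{n-1}$ with the buffer sets $W_n=U_{n+1}\setminus\overline{U_{n-2}}$, and the local-finiteness count are exactly the textbook route to the first assertion; for the converse, the decomposition via connected components of the intersection graph of a locally finite relatively compact refinement, together with the observation that each $\overline{V_\alpha}$ meets only finitely many $V_\beta$ (so components are countable) and that each clopen piece $X_i$ equals $\bigcup_{\alpha\in C}\overline{V_\alpha}$, is precisely the Dieudonn\'e--Bourbaki argument. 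You could cite the reference instead, but as a self-contained proof this is sound.
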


\begin{cor}
A manifold is paracompact if and only if every connected component is second countable. 
\end{cor}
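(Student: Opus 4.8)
The plan is to deduce the corollary directly from Dieudonn\'e's theorem (Theorem~\ref{prop:PA1}), using two structural features of a manifold $M$: it is locally compact and Hausdorff (by the standing assumptions together with local Euclideanness), and it is locally connected, so that its connected components are open as well as closed. This second fact is what lets me identify $M$ with the topological coproduct of its connected components $\{C_i\}_{i\in J}$: the $C_i$ are pairwise disjoint clopen sets covering $M$, which is exactly the coproduct situation appearing in Theorem~\ref{prop:PA1}.

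Before treating either implication I would record the bridge between the two notions in play: for a connected manifold $C$, second countability and $\sigma$-compactness are equivalent. In one direction, a second countable locally compact Hausdorff space is $\sigma$-compact, since a countable base contains a countable subfamily of relatively compact sets covering $C$, whose closures exhibit $C$ as a countable union of compacta. In the other direction, a $\sigma$-compact manifold is second countable: each of the countably many compact pieces is covered by finitely many (second countable) coordinate charts, so $C$ is a countable union of open second countable subsets, and assembling the countable bases of these subsets produces a countable base for $C$. This equivalence is the only place where the Euclidean, rather than merely locally compact, nature of $M$ is used, and nailing it down carefully is the crux of the argument.

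For the forward implication, suppose $M$ is paracompact and let $C$ be a connected component. Since $M$ is locally metrizable and paracompact, Smirnov's theorem makes $M$ metrizable, so the closed (indeed clopen) subset $C$ is metrizable and hence paracompact; alternatively one invokes the Lemma on hereditary paracompactness. Now $C$ is a connected, locally compact, Hausdorff, paracompact space, so by the converse half of Theorem~\ref{prop:PA1} it is a coproduct of Hausdorff locally compact $\sigma$-compact subspaces. Connectedness forces the index set to be a singleton, so $C$ itself is $\sigma$-compact, and therefore second countable by the equivalence above.

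For the converse, suppose every component $C_i$ is second countable. By the equivalence, each $C_i$ is then Hausdorff, locally compact and $\sigma$-compact, and $M$ is precisely the coproduct of the family $\{C_i\}$. The converse half of Theorem~\ref{prop:PA1} then yields that $M$ is paracompact, completing the proof. I expect the only genuine obstacle to be the $\sigma$-compact versus second countable equivalence for connected manifolds; everything else is a direct citation of Dieudonn\'e's theorem together with the openness of components in a locally connected space.
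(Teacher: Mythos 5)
Your proof is correct and is essentially the argument the paper intends: the corollary is stated without proof as an immediate consequence of Theorem~\ref{prop:PA1}, and your write-up supplies exactly the expected details (components are clopen by local connectedness, so $M$ is the coproduct of its components, and second countability is equivalent to $\sigma$-compactness for a connected manifold). No gaps.
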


\begin{lem}\label{lem:L1}
Every compact subset $K$ of a connected manifold $M$ is contained in an open (in $M$) connected relatively compact (hence, paracompact) submanifold $N\subset M$.
\end{lem}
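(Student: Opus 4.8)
The plan is to build $N$ by first covering $K$ with finitely many well-behaved coordinate neighborhoods and then stitching the resulting open set together into a connected one using paths, exploiting that a connected manifold is automatically path-connected (a manifold is locally path-connected, and a connected, locally path-connected space is path-connected).

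First I would observe that every point of $M$ has a relatively compact connected coordinate neighborhood: given a chart $\phi\colon W\to\R^n$ around $x$ and a closed ball $\bar B(\phi(x),r)\subset\phi(W)$, the open set $B=\phi^{-1}(B(\phi(x),r))$ is connected, and its closure in $M$ is contained in the compact set $\phi^{-1}(\bar B(\phi(x),r))$ (compact because $\phi$ is a homeomorphism onto its image and $\bar B(\phi(x),r)$ is compact), so $B$ is relatively compact. By compactness of $K$, I would cover $K$ by finitely many such balls $B_1,\dots,B_m$ and set $U=B_1\cup\cdots\cup B_m$. Then $U$ is open, $K\subset U$, and $U$ is relatively compact since $\bar U\subset\bar B_1\cup\cdots\cup\bar B_m$ is a finite union of compact sets. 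Moreover, each connected $B_i$ lies in a single connected component of $U$, so $U$ has at most $m$ components $C_1,\dots,C_k$.

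Next I would connect these components. Choosing $p_j\in C_j$ and, using path-connectedness of $M$, a path $\gamma_j$ from $p_1$ to $p_j$, the set $\Gamma=\bigcup_{j=1}^k\gamma_j([0,1])$ is compact (a finite union of continuous images of $[0,1]$) and connected (all the $\gamma_j$ share the point $p_1$). I would then cover $\Gamma$ by finitely many relatively compact connected coordinate balls, discarding any that miss $\Gamma$, and let $V$ be their union. As before, $V$ is open and relatively compact; since $\Gamma$ is connected and every ball in the cover meets $\Gamma$, the set $V$ (which contains $\Gamma$) is connected. Finally I set $N=U\cup V$, an open relatively compact submanifold containing $K$. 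It is connected because $V$ is connected and $V\supset\Gamma$ meets every component $C_j$ of $U$ at the point $p_j$, so $N=V\cup\bigcup_j C_j$ is a union of connected sets each meeting the connected set $V$.

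It remains to note that $N$ is paracompact, which is where the earlier machinery enters: $\bar N$ is compact, hence, as a compact Hausdorff space, paracompact, and $M$ is locally metrizable, so by the hereditary-paracompactness lemma every subset of the paracompact set $\bar N$ is paracompact; in particular $N$ is. The only genuinely delicate point is the connectedness bookkeeping---ensuring that the finitely many components of $U$ really can be merged into a single connected open set without destroying relative compactness---and this is exactly what the compactness and connectedness of the path-union $\Gamma$ deliver.
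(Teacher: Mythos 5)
Your proof is correct and follows essentially the same route as the paper: cover $K$ by finitely many relatively compact coordinate balls, join the finitely many components of the resulting open set by paths (using that a connected manifold is path-connected), and cover the connecting paths by further coordinate balls to obtain a connected, open, relatively compact $N$. The only cosmetic difference is that the paper first assembles a single compact connected set $L\supset K$ and covers it once, and justifies paracompactness of $N$ via $\sigma$-compactness rather than via compactness of $\bar N$ and the hereditary-paracompactness lemma; both justifications are available from the paper's preliminaries.
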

\begin{proof} First, let $\cU$ be a finite cover of $K$ by open coordinate balls $B_i\subset M$. Let $\bar{B}_i$ denote the corresponding closed balls. Then 
$$
\bigcup_{i} \bar{B}_i
$$
is a compact subset of $M$ with finitely many connected components. Connecting these components by finitely many paths we get a compact connected 
subset $L\subset M$ containing $K$. Let $\cV$ be a finite cover of $L$ by open coordinate balls. Then the union $N$ of these balls is paracompact, 
connected and contains $K$. \end{proof} 

\medskip
Even though the manifold $N$ in this lemma is not canonical, we will use the notation $\hat K$ for $N$.

\begin{cor}\label{cor:CA2}
Suppose that $M$ is a connected manifold and $K_i\subset M, i\in \mathbb N$, is a countable collection of compact subsets. Then $\bigcup_i K_i$ is contained 
in an open paracompact connected subset $N\subset M$. 
\end{cor}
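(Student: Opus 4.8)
The plan is to reduce to Theorem~\ref{prop:PA1}: since an open subset of a manifold is again a locally compact Hausdorff manifold, it suffices to build an open connected $N$ containing $\bigcup_i K_i$ that is $\sigma$-compact, and then $\sigma$-compactness will immediately force paracompactness. The mechanism for producing such an $N$ is to construct a nested exhaustion by relatively compact open pieces, using Lemma~\ref{lem:L1} repeatedly.

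Concretely, I would build inductively an increasing sequence of open, connected, relatively compact submanifolds $N_i\subset M$ whose \emph{closures} nest inside the next term. Fix a point $p\in M$ (to guarantee nonemptiness) and set $N_1=\widehat{K_1\cup\{p\}}$ via Lemma~\ref{lem:L1}. Given $N_{i-1}$, its closure $\overline{N_{i-1}}$ is compact, so $K_i\cup\overline{N_{i-1}}$ is compact, and I put
\[
N_i:=\widehat{K_i\cup\overline{N_{i-1}}},
\]
again applying Lemma~\ref{lem:L1}. By construction each $N_i$ is open, connected, relatively compact, satisfies $K_i\subset N_i$, and crucially $\overline{N_{i-1}}\subset N_i$ (in particular $N_{i-1}\subset N_i$).

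Now set $N=\bigcup_i N_i$. As a union of open sets it is open, it contains every $K_i$, and as the union of an increasing chain of nonempty connected sets it is connected. The key point is that the nesting of closures gives
\[
N=\bigcup_i N_i\subset \bigcup_i \overline{N_i}\subset \bigcup_i N_{i+1}=N,
\]
so $N=\bigcup_i\overline{N_i}$ is a countable union of compact sets, i.e.\ $N$ is $\sigma$-compact. Being an open subset of the manifold $M$, it is Hausdorff and locally compact, so Theorem~\ref{prop:PA1} shows that $N$ is paracompact, as desired.

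I expect the only real subtlety to be the bookkeeping around passing to closures: one must insert $\overline{N_{i-1}}$ (rather than $N_{i-1}$) into the argument of $\widehat{\,\cdot\,}$ at each stage, precisely so that $\overline{N_{i-1}}\subset N_i$ holds. This is what upgrades the union from a mere countable union of open sets (which need not be $\sigma$-compact) into a genuine compact exhaustion $N=\bigcup_i\overline{N_i}$. The nonemptiness and connectedness of every $N_i$, and hence of $N$, are guaranteed by throwing the base point $p$ into the first term and by the fact that each successive compact set contains the previous closure.
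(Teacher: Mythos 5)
Your proof is correct and follows essentially the same route as the paper: wrap the compacta in connected, relatively compact open sets via Lemma \ref{lem:L1} and conclude by Dieudonn\'e's $\sigma$-compactness criterion (Theorem \ref{prop:PA1}). The only difference is in the bookkeeping: the paper replaces the $K_i$ by their increasing unions and observes that $N=\bigcup_i \hat K_i$ lies inside the $\sigma$-compact set $\bigcup_i cl(\hat K_i)$, then descends to $N$ via hereditary paracompactness, whereas you nest the closures ($\overline{N_{i-1}}\subset N_i$) so that $N$ itself is exhausted by compacta and Theorem \ref{prop:PA1} applies to $N$ directly, avoiding the hereditary-paracompactness step.
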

\begin{proof} Without loss of generality, we may assume that $K_i\subset K_{i+1}$ for every $i$. Consider the family $\{\hat K_i: i\in \mathbb N\}$. The union $N$ of these 
open connected subsets is again open and connected. At the same time, it is contained in 
$$
Y:=\bigcup_{i\in \N} cl(\hat K_i),
$$ 
a $\sigma$-compact subset of $M$. Then $Y$ is paracompact by Proposition \ref{prop:PA1}. Therefore,  $N\subset Y$ is paracompact as well. 
\end{proof}

\section{Proof of Theorem \ref{thm:main}}

 In order to prove Theorem \ref{thm:main} it suffices to consider connected conformal manifolds and prove that they are always metrizable. 
 More precisely, we will check that the complement to a closed coordinate ball in $M$ admits a conformally-natural metric. 
The construction of this metric mostly follows the work of Ferrand, \cite{Ferrand}. Hidden behind the construction is again Perron's method, but it is used differently 
from the standard proofs of Rado's theorem for Riemann surfaces. We let $Lip(M)$ denote the space of Lipschitz 
continuous functions on $M$. (Ferrand uses a slightly different functional space.) 
Lipschitz continuous functions are differentiable a.e. on $M$ and norms of their gradients are locally bounded and, hence, locally integrable.  
We let $Lip_p(M)$ denote the subspace consisting of functions in $Lip(M)$ with {\em paracompact support} and $Lip_c(M)$ the 
subspace of $Lip_p(M)$ consisting of functions with compact support. We will use the fact that for $f_1, f_2\in Lip(M)$, $\max(f_1,f_2)$ is also in $Lip(M)$. 

Recall that our manifold $M$ is $n$-dimensional. 
Given a function $f\in Lip(M)$ we define its {\em energy-density} $e_f(x)$ as   
$$
|\nabla f(x)|^n dV, x\in M,
$$
where the gradient, its norm and the volume density $dV$ are defined with respect to a local conformal metric (defined in a neighborhood of $x$).  
The definition of $e_f$ is independent of the choice of a local conformal metric and $e_f$ is locally integrable. 
Assuming that $f$ is in $Lip_p(M)$, we have the {\em $n$-energy integral} 
$$
I_M(f):= \int_M e_f\in [0,\infty]. 
$$
Note that this integral is well-defined since the support set of $e_f$ is paracompact and, hence, we can use a partition of unity on this support to define the integral. 

Let $C_1, C_2$ be two closed subsets of $M$. The {\em capacity of the pair $(C_1, C_2)$} is defined as
$$
Cap_M(C_1,C_2)=\inf_{f\in A(M,C_1,C_2)} I_M(f),
$$
where $A(M,C_1,C_2)$, the space of {\em admissible functions} with respect to $(C_1, C_2)$, consisting of functions $f\in Lip_p(M)$ such that $0\le f\le 1$, 
$f|_{C_1}\equiv 0$, $f|_{C_2}\equiv 1$. Thus, $0\le Cap_M(C_1,C_2)=Cap_M(C_2, C_1)\le \infty$. By the definition, capacity is a conformal invariant. 
It is also clear that if $N$ is an open subset of a conformal manifold $M$ (with the induced conformal structure), then for any pair of closed subsets 
$C_1, C_2\subset M$, we have
\begin{equation}\label{eq:monotonic}
Cap_N(C_1,C_2)\le Cap_M(C_1,C_2)
\end{equation}
as the restriction map sends  $A(M,C_1,C_2)$ to $A(N,C_1,C_2)$ and decreases the energy. Recall that a compact metrizable space is  called 
a {\em continuum} if it is connected. A {\em nondegenerate continuum} is one which has cardinality at least two (hence, cardinality of continuum). 
A compact subset of a manifold is metrizable, hence, every compact connected subset of a manifold is a continuum. 
We will need the following result from \cite[(1.3)]{Ferrand}: 

\begin{lem}\label{lem:L0}
Let $B=B(\mathbf 0, 1)\subset \R^n$ be the open unit ball equipped with some conformal structure (not necessarily the standard one). 
Then for every $\eps>0$ and $r>0$ there exists 
$f\in Lip_c(B)$ which is identically equal to $1$ on the ball $B(\mathbf 0, r)$ and satisfies $I_B(f)<\eps$.  
\end{lem}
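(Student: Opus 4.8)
The plan is to combine the conformal invariance of the $n$-energy with the elementary fact that, on a spherical ring, a logarithmic cutoff has $n$-energy equal to a universal constant times a negative power of the ring's conformal modulus; driving the modulus to infinity then forces the energy below $\eps$. As a model, on the Euclidean ring $\{r<|x|<R\}\subset\R^n$ the radial function $f(x)=\max\bigl(0,\min\bigl(1,\tfrac{\log(R/|x|)}{\log(R/r)}\bigr)\bigr)$ belongs to $Lip_c$, equals $1$ on $B(\mathbf 0,r)$, and satisfies $I(f)=\omega_{n-1}\bigl(\log(R/r)\bigr)^{1-n}$, where $\omega_{n-1}$ is the volume of $S^{n-1}$. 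Since $n\ge 2$ the exponent $1-n$ is negative, so $I(f)\to 0$ exactly as the modulus $\log(R/r)\to\infty$. The essential point for a \emph{prescribed} $r$ is therefore to manufacture arbitrarily large modulus between the fixed ball $\overline{B(\mathbf 0,r)}$ and $\partial B$ by pushing the \emph{outer} level toward $\partial B$, rather than by shrinking $r$; the latter only ever produces a conveniently small radius.

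To realize this for the given (a priori arbitrary) conformal structure, I would fix any representative metric $g$ in the conformal class, choose a proper exhaustion function $u\colon B\to[0,\infty)$ whose sublevel sets play the role of concentric balls, with $\overline{B(\mathbf 0,r)}\subset\{u\le a\}$, and for $b>a$ define $f_b$ by the same logarithmic interpolation in $u$: set $f_b\equiv 1$ on $\{u\le a\}$, $f_b\equiv 0$ on $\{u\ge b\}$, and let $f_b$ interpolate logarithmically across the ring $\{a\le u\le b\}$. Each $f_b$ lies in $Lip_c(B)$ and equals $1$ on $B(\mathbf 0,r)$, and the stability of $Lip(M)$ under $\max$ keeps the truncations admissible. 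Using that the energy density $e_{f_b}$ is a conformal invariant and integrating along the level sets of $u$ via the coarea formula, one obtains a bound of the shape $I_B(f_b)\le \omega_{n-1}\,M(a,b)^{1-n}$, where $M(a,b)$ is the conformal modulus of the separating ring $\{a\le u\le b\}$. This reduces the lemma to the single estimate $M(a,b)\to\infty$ as $b\to\infty$.

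The main obstacle is precisely this divergence of the modulus, i.e. that the end of $(B,[g])$ at $\partial B$ is $n$-parabolic, so that the fixed ball $\overline{B(\mathbf 0,r)}$ has zero $n$-capacity against the boundary. This is the step that genuinely uses the conformal geometry together with $n\ge 2$, and it is exactly what distinguishes an arbitrary prescribed $r$ from a small one: shrinking $r$ trivially exploits the inner end, whereas a fixed $r$ can be absorbed only through the outer end, whose parabolicity is the heart of the matter. I would extract the divergence by concatenating separating rings and summing their moduli, invoking the superadditivity of the modulus under concatenation — this is the content of Ferrand's estimate \cite[(1.3)]{Ferrand}. Once $M(a,b)\ge(\omega_{n-1}/\eps)^{1/(n-1)}$, the function $f:=f_b$ satisfies $f\in Lip_c(B)$, $f|_{B(\mathbf 0,r)}\equiv 1$ and $I_B(f)<\eps$, which proves the lemma for every prescribed $r$.
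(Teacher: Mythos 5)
Your proposal founders on its pivotal claim. You assert that for an arbitrary conformal structure on $B$ the end at $\partial B$ is $n$-parabolic, i.e.\ that the modulus $M(a,b)$ of the separating rings diverges as $b\to\infty$, and you propose to extract this divergence from superadditivity of moduli under concatenation. This is false already for the \emph{standard} conformal structure, which the lemma explicitly allows: for fixed $r\in(0,1)$, the infimum of $I_B(f)$ over all $f\in Lip_c(B)$ with $f\equiv 1$ on $B(\mathbf 0,r)$ is precisely the conformal capacity of the spherical ring $\{r<|x|<1\}$, namely $\omega_{n-1}\bigl(\log(1/r)\bigr)^{1-n}>0$; equivalently, the total modulus between $\bar{B}(\mathbf 0,r)$ and $\partial B$ equals $\log(1/r)<\infty$, so the concatenation sums you invoke converge rather than diverge. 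No choice of exhaustion $u$ and logarithmic interpolation can beat this infimum, so your argument, engineered precisely to avoid shrinking $r$, cannot close. (Your attribution is also off: \cite[(1.3)]{Ferrand} is the quoted statement itself --- the paper gives no proof of this lemma, citing Ferrand for it --- not a superadditivity estimate for moduli.)

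The flat counterexample shows that on the strong reading, with $r$ universally quantified and fixed, the statement is not provable because it is false; the version that Ferrand proves and that the paper actually uses is existential in $r$: for every $\eps>0$ there exist $r>0$ and $f\in Lip_c(B)$ with $f\equiv 1$ on $B(\mathbf 0,r)$ and $I_B(f)<\eps$, i.e.\ the point $\mathbf 0$ has zero conformal $n$-capacity. Check the two applications: in Lemma \ref{lem:L6} the functions $f_i$ need only equal $1$ \emph{at} the point $x$, and in the continuity-at-the-diagonal step of the metrization proposition the points $x_i,y_i$ converge to the center, so the continua joining them may be taken inside arbitrarily small balls $B(\mathbf 0,r)$. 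So the route you dismissed (``shrinking $r$ only ever produces a conveniently small radius'') is in fact the entire content, and it has a short proof: on the compact set $\bar{B}(\mathbf 0,1/2)$ any representative metric of the conformal class is bi-Lipschitz to the Euclidean one, hence the two $n$-energies are comparable up to a multiplicative constant $\lambda$, and the Euclidean logarithmic cutoff between $|x|=r$ and $|x|=1/2$ gives $I_B(f)\le \lambda\,\omega_{n-1}\bigl(\log(1/(2r))\bigr)^{1-n}\to 0$ as $r\to 0$, with $n\ge 2$ entering through the negative exponent $1-n$. In short: the one idea your proof genuinely needed --- divergence of the outer modulus --- is unavailable, while the idea you rejected is both available and sufficient.
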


From now on, we will assume that our manifold $M$ is connected.

\begin{lem}\label{lem:L2}
1. Suppose that $C_1, C_2$ are disjoint compact subsets of $M$. Then $Cap(C_1, C_2)<\infty$. 

2. Suppose that $C_1, C_2$ are nondegenerate continua in $M$. Then $Cap_M(C_1,C_2)>0$. 
\end{lem}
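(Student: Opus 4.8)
The two parts are of rather different character: the first is a soft construction, the second the analytic heart of the matter.

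For Part 1, the plan is to exhibit a single admissible function of finite energy. Since $C_1\cup C_2$ is compact, Lemma \ref{lem:L1} (or Corollary \ref{cor:CA2}) produces a relatively compact, connected, paracompact open submanifold $N\supseteq C_1\cup C_2$. Being paracompact, $N$ is metrizable and carries a Riemannian metric $h$; let $d$ be the associated distance and $\delta:=d(C_1,C_2)>0$. I would then pick $\rho\in(0,\delta)$ small enough that $S:=\{x\in N: d(x,C_2)\le \rho\}$ is a compact subset of $N$ (possible because $C_2$ is compact and contained in the open set $N$), and set
$$
f(x)=\max\Bigl(0,\ 1-\tfrac{1}{\rho}\,d(x,C_2)\Bigr)
$$
on $N$, extended by $0$ to all of $M$. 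Then $f$ is Lipschitz with $0\le f\le 1$, $f\equiv 1$ on $C_2$, $f\equiv 0$ on $C_1$ (since $d(x,C_2)\ge \delta>\rho$ there), and $\mathrm{supp}(f)\subseteq S$ is compact, so $f\in Lip_c(M)\subseteq A(M,C_1,C_2)$. Finiteness of $I_M(f)$ is then immediate: on the compact support, covered by finitely many charts, $f$ has bounded gradient and the conformally invariant density $|\nabla f|^n\,dV$ is integrated over a set of finite volume. Hence $Cap_M(C_1,C_2)\le I_M(f)<\infty$.

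For Part 2 I would first dispose of the degenerate case: if $C_1\cap C_2\neq\emptyset$ there are no admissible functions and $Cap_M(C_1,C_2)=+\infty$, so assume $C_1,C_2$ disjoint. Passing to a relatively compact paracompact $N\supseteq C_1\cup C_2$ as above and using monotonicity \eqref{eq:monotonic}, it suffices to bound $Cap_N(C_1,C_2)$ from below; on the paracompact $N$ I may represent the conformal structure by a single Riemannian metric, so that $I_N(f)=\int_N |\nabla f|^n\,dV$ is computed directly from one metric. I would then fix $p\in C_1$ and a relatively compact coordinate chart around $p$, and choose coordinate radii $0<r_1<r_2$ with $\bar B(p,r_2)$ relatively compact and disjoint from $C_2$ (possible since $C_2$ is closed and $p\notin C_2$), and $r_1$ small enough that $C_1\not\subset \bar B(p,r_1)$ (possible since $C_1$ is nondegenerate). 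By boundary bumping, the component of $C_1\cap \bar B(p,r_1)$ containing $p$ meets every sphere $\partial B(p,s)$, $0\le s\le r_1$; thus any admissible $f$ vanishes on an inner continuum spanning the radial range $[0,r_1]$ and equals $1$ on $C_2$, which lies outside $B(p,r_2)$.

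The remaining step, and the main obstacle, is to bound the capacity of this condenser below by a positive constant depending only on $n,r_1,r_2$. A naive comparison with the full spherical condenser $(\bar B(p,r_1),\,M\setminus B(p,r_2))$ fails, because shrinking the plates only \emph{decreases} capacity. The correct tool is spherical symmetrization, under which conformal capacity does not increase: symmetrizing the inner continuum to the radial segment $[0,r_1]$ and $C_2$ to an outer radial segment yields a Teichm\"uller-type ring of explicitly positive capacity, so $Cap_N(C_1,C_2)$ is bounded below by that constant. This survives the transfer between the chosen metric on $N$ and the Euclidean structure of the chart, since on a relatively compact chart the two are comparable and the $n$-energy is quasi-invariant under bi-Lipschitz maps, which is all that positivity requires. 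I would regard establishing (or carefully citing, in the spirit of \cite{Ferrand}) this symmetrization inequality as the technical heart of the argument; the reductions and the localization around $p$ are then routine.
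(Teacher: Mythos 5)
Part 1 of your proposal is correct and is essentially the paper's argument, just more explicit: the paper also passes to a paracompact open $U\supset C_1\cup C_2$ and extends a compactly supported admissible function by zero; your distance-function construction of $f$ is a legitimate way to produce such a function (one small point worth checking is that $\{x\in N: d(x,C_2)\le\rho\}$ is compact for small $\rho$, which does hold because any path from $C_2$ to the complement of a relatively compact neighborhood $V$ of $C_2$ must cross the compact frontier of $V$ in $N$). For Part 2, your reduction is also exactly the paper's: pass to a relatively compact connected paracompact $N\supset C_1\cup C_2$ and use the monotonicity \eqref{eq:monotonic}, so that positivity of $Cap_N(C_1,C_2)$ suffices. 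At that point the paper simply cites \cite[(3.6)]{Ferrand} for connected paracompact conformal manifolds and stops; if you take your own hedge of ``carefully citing'' Ferrand, your proof coincides with the paper's.

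Your attempted self-contained argument for that cited step, however, has a genuine gap at the localization. After choosing $p\in C_1$ and radii $r_1<r_2$ with $\bar B(p,r_2)$ disjoint from $C_2$, you propose to bound $I_N(f)$ below by the capacity of a condenser supported in the chart, via symmetrization to a Teichm\"uller ring. But both plates of a Teichm\"uller-type ring must carry prescribed values of $f$ inside the region where you integrate $|\nabla f|^n$, and here the set $\{f=1\}\supset C_2$ lies entirely outside $\bar B(p,r_2)$. The restriction of an admissible $f$ to the chart is $0$ on the inner continuum $E$ but need not come anywhere near the value $1$ there: $f$ could be, say, bounded by $1/100$ throughout $\bar B(p,r_2)$, with all of its transition to $1$ occurring far away in $N$, so the energy of $f$ inside the chart can be made as small as you like without violating admissibility (this is the same phenomenon as Lemma \ref{lem:L0}: small oscillation forced at a single scale costs almost no $n$-energy unless it is forced between two nondegenerate continua \emph{both} lying in the region considered). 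In $\R^n$ one repairs this with a M\"obius normalization placing the two continua across a common annulus before symmetrizing, but on a general conformal manifold no such normalization is available, and tracking where $f$ climbs from $0$ to $1$ globally is precisely the content of Ferrand's (3.6). So either cite that result, as the paper does, or be prepared to reprove it; the chart-local symmetrization as written does not yield a lower bound.
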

\begin{proof} 1. Take an open paracompact subset $U\subset M$ containing $C_1\cup C_2$ and a compactly supported smooth function $f\in A(U, C_1, C_2)$. Then 
extending $f$ by $0$ to the rest of $M$, we obtain an admissible function of finite energy.  

2. Let $N=\hat C\subset M$ be an open connected paracompact subset containing $C=C_1\cup C_2$. Thus, $Cap_N(C_1,C_2)\le Cap_M(C_1,C_2)$. 
But for connected paracompact conformal manifolds $N$ positivity of $Cap_N(C_1,C_2)$ is proven in \cite[(3.6)]{Ferrand}. Now the conclusion follows from the inequality 
\eqref{eq:monotonic}. \end{proof}

Following Ferrand, \cite{Ferrand}, we next define a certain pseudometric on $M$ associated canonically with the conformal structure of $M$. 
For a compact $K\subset M$ set
$$
Cap_M(K):= \inf_f I_M(f),
$$
where the infimum is taken over all functions $f\in Lip_c(M)$ which equal to $1$ on $K$. Such functions will be called {\em $K$-admissible}. Clearly, if $N\subset M$ 
is an open connected subset, then for every compact $K\subset M$,
$$
Cap_M(K)\le Cap_N(K). 
$$

\begin{lem}\label{lem:L3}
$Cap_M(K)<\infty$ for every compact $K\subset M$. 
\end{lem}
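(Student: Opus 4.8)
The plan is to exhibit a single $K$-admissible function of finite $n$-energy; since $Cap_M(K)$ is by definition an infimum over all such functions, this immediately yields the desired bound. The natural candidate is a smooth bump function equal to $1$ on $K$ and compactly supported in a neighborhood of $K$.

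First I would invoke Lemma \ref{lem:L1} to enclose the compact set $K$ in an open connected relatively compact, hence paracompact, submanifold $N=\hat K\subset M$. The point of passing to $N$ is that, being paracompact (equivalently, by Smirnov's theorem, metrizable), $N$ admits smooth partitions of unity, so that standard bump-function technology becomes available even though $M$ itself is not assumed paracompact. Concretely, since $K$ is a compact subset of the open set $N$, I would produce a function $\phi\in Lip_c(N)$ (indeed smooth, e.g.\ via a partition of unity on $N$) with $0\le\phi\le 1$, with $\phi\equiv 1$ on a neighborhood of $K$, and with $\operatorname{supp}(\phi)$ a compact subset of $N$.

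Next I would extend $\phi$ by zero to all of $M$, obtaining a function $f$. The key observation is that $\operatorname{supp}(\phi)$ is a \emph{compact} subset of the \emph{open} set $N$, so $M=N\cup\bigl(M\setminus\operatorname{supp}(\phi)\bigr)$; on the first open set $f=\phi$ and on the second $f=0$, and these two descriptions agree on the overlap, so $f$ is smooth (in particular locally Lipschitz) on all of $M$, with compact support. Thus $f\in Lip_c(M)$ is $K$-admissible. To estimate its energy, note that the energy density $e_f=|\nabla f|^n\,dV$ is supported in the compact set $\operatorname{supp}(f)$, which I may cover by finitely many coordinate charts carrying smooth local conformal metrics. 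On each such chart $|\nabla f|$ is continuous, hence bounded, and $dV$ is a smooth density, so each local contribution is finite; summing the finitely many pieces gives $I_M(f)<\infty$, and therefore $Cap_M(K)\le I_M(f)<\infty$.

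I do not expect a serious obstacle: once $K$ is trapped inside the paracompact neighborhood $N$, both the existence of the bump function and the finiteness of the integral are routine. The only points deserving care are the interpretation of $Lip$ on a non-paracompact manifold — compactly supported smooth functions are locally Lipschitz and their energy density is locally bounded, so they genuinely lie in $Lip_c(M)$ — and the verification that the zero-extension of $\phi$ is globally smooth, which hinges precisely on $\operatorname{supp}(\phi)$ being compact and contained in the open set $N$.
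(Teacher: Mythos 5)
Your proposal is correct and follows essentially the same route as the paper: take a relatively compact open neighborhood of $K$, build a compactly supported bump function there that is identically $1$ on $K$, extend by zero, and bound the energy by compactness of the support. The paper states this more tersely, but the underlying argument is identical.
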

\begin{proof} Take a relatively compact open subset $N\subset M$ containing $K$. Then pick a function $f\in C^1_c(N)$ which is identically $1$ 
on $K$ and extend it by zero to the rest of $M$. Thus, 
$Cap_M(K)\le I_M(f)<\infty$.  \end{proof}

\begin{lem}\label{lem:L4}
If $K_1\subset K_2$ are compacts in $M$, then 

1. $Cap_M(K_1)\le Cap_M(K_2)$,  

2. $Cap_M(K_1\cup K_2)\le Cap_M(K_1)+ Cap_M(K_2)$.  
\end{lem}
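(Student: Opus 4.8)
The plan is to treat the two parts separately: part~1 is immediate from the variational definition, while part~2 rests on a gradient comparison for the maximum of two Lipschitz functions.

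For part~1, I would observe that monotonicity is built into the definition of $Cap_M$. If $K_1\subset K_2$, then any $f\in Lip_c(M)$ which is $K_2$-admissible (i.e.\ $f\equiv 1$ on $K_2$) is automatically $K_1$-admissible, since $f\equiv 1$ on $K_2\supset K_1$. Hence the set of $K_1$-admissible functions contains the set of $K_2$-admissible functions, and taking the infimum of $I_M$ over a larger set can only decrease it. Therefore $Cap_M(K_1)\le Cap_M(K_2)$.

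For part~2, I would use the standard subadditivity trick via the maximum. Let $f_1$ be any $K_1$-admissible function and $f_2$ any $K_2$-admissible function; such functions of finite energy exist by Lemma~\ref{lem:L3}. Set $g:=\max(f_1,f_2)$. Since the maximum of two Lipschitz functions is again Lipschitz and $\mathrm{supp}(g)\subset \mathrm{supp}(f_1)\cup \mathrm{supp}(f_2)$ is compact, we have $g\in Lip_c(M)$; moreover $g\equiv 1$ on $K_1\cup K_2$ because $f_1\equiv 1$ on $K_1$ and $f_2\equiv 1$ on $K_2$. Thus $g$ is $(K_1\cup K_2)$-admissible, and so $Cap_M(K_1\cup K_2)\le I_M(g)$.

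It remains to bound $I_M(g)$ by $I_M(f_1)+I_M(f_2)$, and this is the step I expect to be the main obstacle. The key is a pointwise (a.e.) description of the gradient of $g$, computed in each chart with respect to a common local conformal metric: on the open set $\{f_1>f_2\}$ one has $\nabla g=\nabla f_1$, on $\{f_2>f_1\}$ one has $\nabla g=\nabla f_2$, and on the coincidence set $\{f_1=f_2\}$ the gradients agree a.e., so $\nabla g=\nabla f_1=\nabla f_2$ there. The only nontrivial ingredient is this last assertion, which follows from the standard fact that a Lipschitz (indeed $W^{1,1}_{loc}$) function has vanishing gradient a.e.\ on any of its level sets, applied to $f_1-f_2$ on $\{f_1-f_2=0\}$. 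Consequently $|\nabla g|^n$ equals one of $|\nabla f_1|^n,|\nabla f_2|^n$ a.e., and since these are nonnegative, the energy densities satisfy $e_g\le e_{f_1}+e_{f_2}$ a.e.\ (the inequality is meaningful globally since $e_g$ is independent of the chosen local conformal metric). Integrating, which is legitimate because all supports are compact and hence paracompact, gives $I_M(g)\le I_M(f_1)+I_M(f_2)$. Combining this with $Cap_M(K_1\cup K_2)\le I_M(g)$ and taking the infimum over $f_1$ and $f_2$ independently yields $Cap_M(K_1\cup K_2)\le Cap_M(K_1)+Cap_M(K_2)$.
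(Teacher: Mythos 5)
Your proposal is correct and follows essentially the same route as the paper: part 1 by inclusion of admissible classes, part 2 by taking $\max(f_1,f_2)$, splitting $M$ into $\{f_1>f_2\}$, $\{f_2>f_1\}$ and the coincidence set, and controlling the energy density on the coincidence set (where your appeal to the a.e.\ vanishing of the gradient of $f_1-f_2$ on its zero level set is, if anything, a slightly more explicit justification than the paper's pointwise inequality $|\nabla f|\le\max(|\nabla f_1|,|\nabla f_2|)$ there).
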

\begin{proof} The first part is immediate. To prove the second part, take $K_i$-admissible functions $f_i, i=1,2$, on $M$ 
and set $f:= \max(f_1, f_2)$. Then $f$ is $K$-admissible for $K=K_1\cup K_2$. Set
$$
A_1:= \{x: f_1(x)> f_2(x)\}, A_2:= \{x: f_2(x)> f_1(x)\}, A_0:= \{x: f_1(x)=f_2(x)\}. 
$$
Thus, $M= A_1\sqcup A_2\sqcup A_0$. We have
$$
I_M(f)= I_{A_1}(f_1) + I_{A_2}(f_2) + \int_{A_0} e_f. 
$$
Clearly,
$$
I_{A_1}(f_1) + I_{A_2}(f_2)= \int_{A_1\cup A_2} (e_{f_1} + e_{f_2})= \int_{A_1\cup A_2} e_f. 
$$
It remains to analyze the integrals of energy-densities over $A_0$ (note that $A_0$ can have positive measure). 
Take a point $x_0\in A_0$, and fix a local conformal metric $g$ on an open coordinate ball $B\subset M$ centered at $x_0$. Then, with respect to this metric, 
$$
|\nabla f(x)|\le \max( |\nabla f_1(x)|, |\nabla f_2(x)|)\le  |\nabla f_1(x)|+ |\nabla f_2(x)|, x\in B\cap A_0.$$ 
Hence (denoting $dV$ the volume density of $g$), 
$$
\int_{B\cap A_0} e_f \le \int_{B\cap A_0}  (|\nabla f_1(x)|+ |\nabla f_2(x)|)dV=  \int_{B\cap A_0} (e_{f_1} + e_{f_2}). 
$$
Therefore, 
$$
\int_{A_0} e_f\le \int_{A_0} (e_{f_1} + e_{f_2}). 
$$
Lemma follows.  
\end{proof}

\begin{lem}\label{lem:L5}
Let $N\subset M$ be the complement to a closed coordinate ball $\bar{B}\subset M$. Then for every nondegenerate continuum 
$K\subset N$ we have $Cap_N(K)>0$. 
\end{lem}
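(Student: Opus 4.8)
The plan is to reduce the single-set capacity $Cap_N(K)$ to the pair capacity $Cap_M(\bar B, K)$, which is strictly positive by part 2 of Lemma \ref{lem:L2}. The guiding observation is that deleting the closed coordinate ball $\bar B$ forces every admissible competitor to vanish on (a neighborhood of) $\bar B$, so that the single plate $K$ is automatically paired against $\bar B$ as a second plate of a condenser.

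First I would take an arbitrary $K$-admissible function $f\in Lip_c(N)$, so that $f\equiv 1$ on $K$ and $\operatorname{supp}(f)$ is a compact subset of the open set $N=M\setminus \bar B$. Replacing $f$ by its truncation $f':=\max(0,\min(1,f))$ keeps $f'\in Lip_c(N)$ and $f'\equiv 1$ on $K$, while now $0\le f'\le 1$; moreover $|\nabla f'|\le |\nabla f|$ almost everywhere, so $I_N(f')\le I_N(f)$. (Here I use, exactly as in the proof of Lemma \ref{lem:L4}, that maxima and minima of Lipschitz functions are Lipschitz.)

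Next, since $\operatorname{supp}(f')$ is a compact subset of $N$, it is disjoint from $\bar B$; hence extending $f'$ by zero yields a genuinely Lipschitz function $g\in Lip_c(M)$ that vanishes on an open neighborhood of $\bar B$, satisfies $g\equiv 1$ on $K$, and obeys $0\le g\le 1$. Thus $g\in A(M,\bar B, K)$, and because $g$ vanishes outside $N$ we have $I_M(g)=I_N(f')$. Combining the two steps gives $I_N(f)\ge I_N(f')=I_M(g)\ge Cap_M(\bar B, K)$, and taking the infimum over all $K$-admissible $f$ yields $Cap_N(K)\ge Cap_M(\bar B, K)$.

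Finally I would note that $\bar B$, being a closed coordinate ball, is compact, connected and of cardinality at least two, hence a nondegenerate continuum, and it is disjoint from the nondegenerate continuum $K\subset N$. Part 2 of Lemma \ref{lem:L2} then gives $Cap_M(\bar B, K)>0$, whence $Cap_N(K)>0$. I do not expect a serious obstacle here; the one point that genuinely requires care is the passage from a single-plate competitor on $N$ to a two-plate competitor on $M$, namely verifying that compact support in $N$ really forces vanishing on $\bar B$ and that the truncation preserves admissibility while not increasing the $n$-energy.
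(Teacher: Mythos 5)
Your proof is correct and follows essentially the same route as the paper: extend the competitor by zero to obtain an element of $A(M,\bar B,K)$ and invoke Part~2 of Lemma~\ref{lem:L2}. Your extra truncation step $f'=\max(0,\min(1,f))$ is a worthwhile refinement, since a $K$-admissible function need not a priori satisfy $0\le f\le 1$, a point the paper's one-line proof glosses over.
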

\begin{proof} Take a $K$-admissible function $f\in Lip_c(N)$ and extend it by $0$ to $\bar{B}$. We get a function $u\in A(M, \bar{B}, K)$ and $I_N(f)=I_M(u)$. 
By Lemma \ref{lem:L2} (Part 2), there exists $r>0$ independent of $f$, such that $I_M(u)\ge r$.  Hence, $I_N(f)\ge r>0$ and lemma follows. 
\end{proof}

\begin{lem}\label{lem:L6}
For every singleton $K=\{x\}\subset M$, $Cap_M(K)=0$. 
\end{lem}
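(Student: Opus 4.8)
The plan is to reduce the statement directly to Lemma \ref{lem:L0} by passing to a coordinate chart around $x$. First I would choose a coordinate ball $U\subset M$ containing $x$, together with a chart $\phi: U\to B=B(\mathbf 0, 1)$ sending $x$ to the origin $\mathbf 0$. Pushing the conformal structure of $M$ forward by $\phi$ equips $B$ with a conformal structure (generally not the standard one), which is precisely the setting of Lemma \ref{lem:L0}.

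Next, given $\eps>0$, I would apply Lemma \ref{lem:L0} to $B$ with this conformal structure, taking $r=1/2$, to obtain a function $g\in Lip_c(B)$ with $g\equiv 1$ on $B(\mathbf 0, 1/2)$ and $I_B(g)<\eps$. Pulling back, I set $f:=g\circ\phi\in Lip_c(U)$. Since $\phi$ is a conformal diffeomorphism onto $B$ (with the pushed-forward structure) and the energy integral is a conformal invariant, $I_U(f)=I_B(g)<\eps$, while $f(x)=g(\mathbf 0)=1$.

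Because $g$ has compact support inside $B$, the function $f$ has compact support inside $U$, so extending it by $0$ to the rest of $M$ produces $\tilde f\in Lip_c(M)$ that is still equal to $1$ at $x$. The extension contributes no energy outside $U$, as its gradient vanishes there, whence $I_M(\tilde f)=I_U(f)<\eps$. As $\tilde f$ is $K$-admissible for $K=\{x\}$, this yields $Cap_M(K)\le I_M(\tilde f)<\eps$. Since $\eps>0$ was arbitrary, $Cap_M(K)=0$.

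I do not expect any substantial obstacle here: the statement is essentially an immediate consequence of Lemma \ref{lem:L0}. The only points requiring (routine) care are that the energy density $e_f$ is invariant under the conformal pullback by $\phi$, and that extension by zero of a compactly supported Lipschitz function remains Lipschitz with compactly supported gradient. Both follow from the conformal naturality of $e_f$ recorded earlier and the locality of the energy density.
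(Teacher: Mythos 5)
Your proposal is correct and follows essentially the same route as the paper: apply Lemma \ref{lem:L0} in a coordinate ball centered at $x$ to produce compactly supported admissible functions of arbitrarily small energy, then extend by zero to all of $M$. The extra care you take with the chart and conformal invariance of the energy is fine but not a different argument.
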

\begin{proof} Let ${B}$ be an open coordinate ball in $M$ centered at $x$. Then, according to Lemma \ref{lem:L0},  there exists a sequence of 
smooth functions $f_i\in Lip_c(B)$ which are all equal to $1$ at $x$ and
$$
\lim_{i\to\infty} I_B(f_i)=0. 
$$
Extending functions $f_i$ by $0$ to the rest of $M$, we conclude that $Cap_M(K)=0$. 
\end{proof}

\begin{definition}
For points $x, y\in M$ define $\mu_M(x,y):= \inf_C Cap_M(C)$, where the infimum is taken over all continua $C\subset M$ containing $\{x, y\}$. 
\end{definition}

\begin{lem}\label{lem:L7}
The function $\mu_M$ is a finite pseudometric on $M$. 
\end{lem}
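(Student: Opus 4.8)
The plan is to verify the four pseudometric axioms, only the last of which requires genuine work. Nonnegativity is immediate since $Cap_M\ge 0$ by definition, and symmetry is built into the definition of $\mu_M$, because the collection of continua containing $\{x,y\}$ coincides with the collection containing $\{y,x\}$. For vanishing on the diagonal, observe that the singleton $\{x\}$ is a (degenerate) continuum containing $\{x,x\}$, so $\mu_M(x,x)\le Cap_M(\{x\})$, and the right-hand side equals $0$ by Lemma \ref{lem:L6}; hence $\mu_M(x,x)=0$.

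Finiteness I would deduce from connectedness of $M$. A connected manifold is path-connected, so given $x,y\in M$ there is a continuous path joining them; its image $C$ is compact and connected, and, being a compact subset of a manifold, metrizable, hence a continuum containing $\{x,y\}$. By Lemma \ref{lem:L3} we have $Cap_M(C)<\infty$, and therefore $\mu_M(x,y)\le Cap_M(C)<\infty$.

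The triangle inequality is the crux. Fix $x,y,z\in M$ and $\eps>0$. By definition of the infimum I can choose a continuum $C_1$ containing $\{x,y\}$ with $Cap_M(C_1)\le \mu_M(x,y)+\eps$ and a continuum $C_2$ containing $\{y,z\}$ with $Cap_M(C_2)\le \mu_M(y,z)+\eps$. The union $C:=C_1\cup C_2$ is compact as a union of two compacts, metrizable as a compact subset of a manifold, and connected, the last point because $C_1$ and $C_2$ are each connected and share the common point $y$. Thus $C$ is a continuum, and it contains both $x$ and $z$.

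It remains to estimate $Cap_M(C)$, and here I would invoke subadditivity of capacity. Although Lemma \ref{lem:L4}(2) is stated for nested compacts, its proof — forming $f=\max(f_1,f_2)$ out of admissible functions for the two sets — nowhere uses the nestedness, and so in fact yields $Cap_M(C_1\cup C_2)\le Cap_M(C_1)+Cap_M(C_2)$ for arbitrary compacts. Combining the estimates gives $\mu_M(x,z)\le Cap_M(C)\le \mu_M(x,y)+\mu_M(y,z)+2\eps$, and letting $\eps\to 0$ concludes the argument. The main obstacle is exactly this triangle step: one must manufacture a single continuum through $x$ and $z$ from the two near-optimal continua, which concatenation at the shared point $y$ provides, and then apply subadditivity in the slightly more general, non-nested, form.
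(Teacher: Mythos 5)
Your proof is correct and follows the same route as the paper: symmetry from the definition, finiteness from path-connectedness plus Lemma \ref{lem:L3}, vanishing on the diagonal from Lemma \ref{lem:L6}, and the triangle inequality by concatenating two near-optimal continua at the shared point and applying the subadditivity of Lemma \ref{lem:L4}(2). You are also right that the nestedness hypothesis in Lemma \ref{lem:L4} plays no role in the proof of its second part, which is exactly the (non-nested) form needed here and tacitly used by the paper.
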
 
\begin{proof} Symmetry of $\mu_M$ is clear. Since $M$ is a connected manifold, it is path-connected; 
hence, every two points $x, y\in M$ belong to a continuum $C\subset M$. 
Lemma \ref{lem:L3} then implies that $\mu_M(x,y)\le Cap_M(C)<\infty$, 
hence, $\mu_M$ is finite.  
The triangle inequality follows from Lemma \ref{lem:L4} (Part 2).  Lemma \ref{lem:L6} implies that $\mu_M(x,x)=0$, 
since we can take $C=\{x\}$ as our continuum containing $\{x\}$.  
\end{proof}

\begin{definition}
A conformal manifold $M$ is said to be of {\em Class I} if the pseudometric $\mu_M$ is not a metric and is said to be of {\em Class II} otherwise. 
\end{definition}

\begin{prop}\label{prop:P1}
The following are equivalent:

1. For some pair of distinct points $x, y\in M$, $\mu_M(x,y)=0$, i.e. $M$ is of Class I. 

2. For all pairs of points $x, y\in M$, $\mu_M(x,y)=0$. 

3. For every continuum $C\subset M$, $Cap_M(C)=0$. 

4. There exists a nondegenerate continuum $C\subset M$, such that $Cap_M(C)=0$. 
\end{prop}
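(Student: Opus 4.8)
The plan is to establish all four conditions by closing the cycle $3\Rightarrow2\Rightarrow1$, $3\Rightarrow4\Rightarrow1$, together with the single upgrading implication $1\Rightarrow3$; every implication except the last is immediate. Indeed, $2\Rightarrow1$ is trivial, since a nonempty connected manifold of dimension $n\ge2$ has at least two points. For $3\Rightarrow2$, any two (distinct) points $x,y$ lie on a common arc $C\subset M$ by path-connectedness, exactly as in the proof of Lemma \ref{lem:L7}, so $\mu_M(x,y)\le Cap_M(C)=0$. For $3\Rightarrow4$ it suffices to note that $M$ contains nondegenerate continua (any arc), and $4\Rightarrow1$ follows by picking two distinct points $x,y$ of the given nondegenerate continuum $C_0$, whence $\mu_M(x,y)\le Cap_M(C_0)=0$. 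Thus everything reduces to proving that the degeneracy of $\mu_M$ at one pair forces every continuum to have zero capacity.

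To prove $1\Rightarrow3$ I would first observe that, since continua are compact, it suffices to show $Cap_M(K)=0$ for every compact $K\subset M$. Unwinding the hypothesis $\mu_M(x_0,y_0)=0$, I obtain continua $D_i\ni x_0,y_0$ and functions $g_i\in Lip_c(M)$ with $0\le g_i\le1$, $g_i\equiv1$ on $D_i$, and $I_M(g_i)\to0$. Fix a compact $K$ and a relatively compact connected open $U\supset K\cup\{x_0\}$. The crucial point will be a \emph{local uniform convergence} $g_i\to1$ on $M$; granting it, fix $\delta\in(0,1)$, so that $g_i\ge1-\delta$ on $\overline U$ for all large $i$, and set $f_i:=\min(1,\delta^{-1}\max(g_i-(1-2\delta),0))$. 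Then $f_i\in Lip_c(M)$, $f_i\equiv1$ on $\overline U\supseteq K$, and by the homogeneity $|\nabla(\delta^{-1}g)|^n=\delta^{-n}|\nabla g|^n$ we get $I_M(f_i)\le\delta^{-n}I_M(g_i)\to0$, so $Cap_M(K)=0$ and statement 3 follows. It is essential here that $\delta$ stays fixed while the energy tends to $0$: truncating at a level $\delta_i\to0$ would inflate the energy by $\delta_i^{-n}$ and ruin the estimate.

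The hard part, and the heart of the matter, is the local uniform convergence $g_i\to1$, which is precisely the \emph{$n$-parabolicity dichotomy}: a single nondegenerate continuum of vanishing energy forces the degeneracy to propagate over all of $M$. A genuine subtlety is that arbitrary near-minimizers $g_i$ need \emph{not} tend to $1$ pointwise, since at the critical exponent $n$ a Lipschitz function can develop an arbitrarily deep but narrow downward spike at negligible energy cost (a point has zero capacity, Lemma \ref{lem:L6}); hence $g_i\ge1-\delta$ cannot be read off from $I_M(g_i)\to0$ by soft monotonicity or subadditivity alone (Lemma \ref{lem:L4}). I would circumvent this by replacing $g_i$ with the capacitary potential $u_i$ of $D_i$ relative to an exhaustion $\Omega_1\subset\subset\Omega_2\subset\subset\cdots$ of $M$ (so $u_i\equiv1$ on $D_i$, $u_i\equiv0$ off $\Omega_i$, minimizing $I_M$); such minimizers are $n$-harmonic away from $D_i$, hence locally equicontinuous by Harnack, with energies dominated by $I_M(g_i)\to0$. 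A locally uniform limit $u_\infty$ is then $n$-harmonic on $M$, equals $1$ on a nondegenerate continuum, and has zero energy, hence is the constant $1$ on the connected manifold $M$; the nondegeneracy (definite diameter) of the $D_i$ supplies the annular condenser lower bounds — quantitative forms of the positivity statements in Lemmas \ref{lem:L2} and \ref{lem:L5} and of Lemma \ref{lem:L0} — that anchor the Harnack chain from a fixed ball containing $x_0$ out to $\overline U$. Making this convergence rigorous, equivalently showing that the vanishing-energy functions can be upgraded to functions equal to $1$ on arbitrarily large compacta, is the main obstacle, and is exactly the potential-theoretic core underlying Ferrand's construction \cite{Ferrand}.
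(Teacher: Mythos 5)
Your handling of the easy implications is correct and matches the paper's (which simply declares them obvious): everything reduces to $1\Rightarrow3$. But your proof of $1\Rightarrow3$ has a genuine gap, which you acknowledge yourself: the ``local uniform convergence $g_i\to 1$'' (equivalently, the convergence of the capacitary potentials $u_i$ to the constant $1$) is precisely the hard potential-theoretic content of the implication, and you leave it as ``the main obstacle.'' The truncation $f_i=\min(1,\delta^{-1}\max(g_i-(1-2\delta),0))$ and the reduction from continua to arbitrary compacta are fine, but they are conditional on that unproven core, so what you have is a plan rather than a proof. Worse, the sketch you give for the core is circular in the present setting: you form capacitary potentials relative to an exhaustion of $M$ by relatively compact open sets. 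A connected manifold admits such an exhaustion if and only if it is $\sigma$-compact, i.e.\ paracompact --- which is exactly what the whole paper is trying to establish. Existence of minimizers, their $n$-harmonicity, Harnack estimates, and the Arzel\`a--Ascoli limiting argument all presuppose working on a space where the relevant analysis (partitions of unity, exhaustions, Sobolev theory) is available.

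The paper's proof sidesteps all of this by not reproving Ferrand's theorem: \cite[(6.8)]{Ferrand} already establishes $1\Rightarrow3$ for \emph{paracompact} manifolds, and the only new content is a reduction of the general case to the paracompact one. Concretely, the witnesses of $\mu_M(x,y)=0$ are countably many functions $f_i\in Lip_c(M)$ with compact supports $K_i$; by Corollary \ref{cor:CA2} the union of the $K_i$ together with the given continuum $C$ lies in a paracompact open connected $N\subset M$; each $f_i$ restricts to a compactly supported admissible function on $N$, so $\mu_N(x,y)=0$, hence $Cap_N(C)=0$ by Ferrand, and finally $Cap_M(C)\le Cap_N(C)=0$ by monotonicity of capacity under enlarging the manifold. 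If you want to salvage your approach, this localization is also what you need: run your exhaustion-and-potential argument inside such an $N$, where exhaustions exist, and transfer back to $M$ by monotonicity --- but even then you would owe the reader the full nonlinear potential theory that Ferrand's (6.8) supplies.
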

\begin{proof} The only part which is not obvious is the implication (1)$\Rightarrow$(3). This implication is proven in \cite[(6.8)]{Ferrand} for paracompact manifolds. 
One way to argue would be to adapt her proof to the general case where manifolds are not assumed to be paracompact. 
Instead, we reduce the general case to the paracompact one. Since 
$\mu_M(x,y)=0$, there exists a sequence of nondegenerate continua $C_i\subset M$ (containing $\{x,y\}$) and 
$C_i$-admissible functions $f_i\in Lip_c(M)$ such that $I_M(f_i)< 1/i$. Let $K_i$ denote 
the (compact) support set of $f_i$, hence, $C_i\subset K_i$. The union
$$
C\cup \bigcup_i K_i 
$$ 
is contained in a paracompact open connected subset $N\subset M$, see Corollary \ref{cor:CA2}. 
Thus, $\mu_N(x,y)=0$ (since each $f_i$ restricts to a compactly supported function on $N$). 
But this implies that $Cap_N(C)=0$ as proven by Ferrand,   
\cite[(6.8)]{Ferrand}. Since $Cap_M(C)\le Cap_N(C)=0$, we conclude that $Cap_M(C)=0$ as well. 
\end{proof}

\begin{cor}\label{cor:C1}
(See \cite[Example 6.9(b)]{Ferrand}.) Let $K=\bar{B}\subset M$ be a closed coordinate ball. Then the manifold $N:= M- K$ is of Class II. 
\end{cor}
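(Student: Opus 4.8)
The plan is to read off the conclusion directly from Lemma~\ref{lem:L5} and Proposition~\ref{prop:P1}, with one small topological point to dispatch first. Recall that $N$ being of Class~II means exactly that $\mu_N$ is an honest metric, i.e. that $N$ fails to be of Class~I. By the equivalence of conditions (1) and (4) in Proposition~\ref{prop:P1} (applied with $N$ in place of $M$), the manifold $N$ is of Class~I if and only if there exists a nondegenerate continuum $C\subset N$ with $Cap_N(C)=0$. Thus it suffices to rule out the existence of any such continuum.

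First I would check that $N=M-\bar B$ is connected, since Proposition~\ref{prop:P1} and the Class~I/II dichotomy are formulated only for connected manifolds. This is where the hypothesis $n\ge 2$ enters. Enlarge $\bar B$ to an open coordinate ball $B'\supset\bar B$ lying in a single chart, so that the ``annulus'' $A:=B'-\bar B$ is connected (here $n\ge 2$ is used, as $A\cong S^{n-1}\times(0,\delta)$ is connected only in dimension $\ge 2$). If $N=U_1\sqcup U_2$ were a disconnection into nonempty open sets, then the connected set $A\subset N$ would lie entirely in one of them, say $A\subset U_1$; but then $U_1\cup B'=U_1\cup\bar B$ is open and disjoint from the open set $U_2$, and $M=(U_1\cup\bar B)\sqcup U_2$, contradicting connectedness of $M$. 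Hence $N$ is connected, and it carries the conformal structure induced from $M$, so $Cap_N$ and $\mu_N$ are defined.

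The core step is then immediate: Lemma~\ref{lem:L5} asserts precisely that for the complement $N$ of a closed coordinate ball, every nondegenerate continuum $C\subset N$ satisfies $Cap_N(C)>0$. Consequently condition~(4) of Proposition~\ref{prop:P1} fails for $N$, so $N$ is not of Class~I, and therefore $N$ is of Class~II, as claimed.

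I do not expect a genuine obstacle here, since all the analytic content, namely the positivity of capacity obtained via Ferrand's result and the extension-by-zero argument of Lemma~\ref{lem:L5}, has already been established; the corollary is essentially bookkeeping. The only subtlety worth flagging is the connectedness of $N$, which is needed merely so that the statement ``$N$ is of Class~II'' is meaningful, and which is exactly where dimension $\ge 2$ is indispensable (in line with the footnote to Theorem~\ref{thm:main}).
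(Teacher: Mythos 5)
Your proof is correct and follows essentially the same route as the paper: both reduce to Proposition~\ref{prop:P1} and the positivity of capacity from Lemma~\ref{lem:L2}(2), the only cosmetic difference being that you invoke Lemma~\ref{lem:L5} directly while the paper re-derives that step inline via $Cap_N(C)=0\Rightarrow Cap_M(C,K)=0$. Your explicit verification that $N$ is connected in dimension $\ge 2$ is a welcome elaboration of a point the paper merely asserts.
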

\begin{proof} So far, we have not used the assumption that $M$ has dimension $>1$ (except, indirectly, in Lemma \ref{lem:L2}). 
We will use it now explicitly and observe that due to this dimension assumption, 
the manifold $N$ is connected. Suppose that $\mu_N$ is not a metric. Then, by Proposition \ref{prop:P1}, there exists a nondegenerate 
continuum $C\subset N$ such that $Cap_N(C)=0$, which implies that 
$Cap_M(C, K)=0$. But this contradicts Part 2 of Lemma \ref{lem:L2}. 
\end{proof}

\begin{prop}
If $M$ is of Class II, then the metric $\mu_M$ metrizes $M$ as a topological space. 
\end{prop}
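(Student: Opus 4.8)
The plan is to show that the metric topology $\tau_\mu$ determined by $\mu_M$ coincides with the given manifold topology $\tau$ on $M$. Lemma~\ref{lem:L7} already gives that $\mu_M$ is a pseudometric, and the Class~II hypothesis is precisely the statement that $\mu_M$ is a genuine metric, so the proposition amounts to showing that the identity map $(M,\tau)\to(M,\tau_\mu)$ is a homeomorphism. I would prove the two inclusions of topologies separately.

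For the easy inclusion $\tau_\mu\subseteq\tau$, I would show that $\mu_M$ is continuous with respect to $\tau$, which makes every $\mu_M$-ball $\tau$-open. It suffices to check that $\mu_M(y_0,y)\to 0$ as $y\to y_0$ in $\tau$, since the triangle inequality then upgrades this to joint continuity. Fix a coordinate ball $B$ centered at $y_0$. Given $\eps>0$, Lemma~\ref{lem:L0} provides a radius $r>0$ and $f\in Lip_c(B)$ with $f\equiv 1$ on $B(\mathbf 0,r)$ and $I_B(f)<\eps$. For every $y\in B(\mathbf 0,r)$ the straight segment $[\mathbf 0,y]$ is a continuum containing $y_0$ and $y$, and $f$ (extended by $0$) is admissible for it, so $Cap_M([\mathbf 0,y])\le I_B(f)<\eps$ and hence $\mu_M(y_0,y)<\eps$.

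For the hard inclusion $\tau\subseteq\tau_\mu$, let $V$ be $\tau$-open and $x\in V$; I would choose a coordinate ball $B$ with $x\in B$ and $\bar B\subset V$ compact, and exhibit a $\mu_M$-ball about $x$ contained in $B$. The sphere $\partial B=\bar B\setminus B$ is $\tau$-compact and avoids $x$, so by the continuity established above the function $p\mapsto\mu_M(x,p)$ attains a minimum $\delta$ on $\partial B$; because $\mu_M$ is a \emph{metric} (here Class~II is used) this minimum is attained at a point distinct from $x$, so $\delta>0$. I then claim $B_\mu(x,\delta)\subseteq B$. Suppose $\mu_M(x,w)<\delta$ but $w\notin B$, and pick a continuum $C\ni x,w$ with $Cap_M(C)<\delta$. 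Since $C$ is connected, contains $x\in B$, and is not contained in $B$ (as $w\notin B$), it must meet $\partial B$ at some point $p$; but then, using that $p$ lies on the continuum $C$, we get $\mu_M(x,p)\le Cap_M(C)<\delta$, contradicting $\mu_M(x,p)\ge\delta$. Hence $B_\mu(x,\delta)\subseteq B\subseteq V$, so $V$ is $\tau_\mu$-open.

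The substantive point — and the only place the Class~II hypothesis is essential — is this second inclusion: the lower bound preventing a point that is far away in the manifold topology from being $\mu_M$-close to $x$. The mechanism that makes it work is the monotonicity of capacity under inclusion of continua, Lemma~\ref{lem:L4}(1): any low-capacity continuum joining $x$ to a far point $w$ is forced by connectedness to cross the compact sphere $\partial B$ at some $p$, and monotonicity bounds $\mu_M(x,p)$ by the capacity of that \emph{same} continuum, contradicting the strictly positive lower bound $\delta$ coming from compactness of $\partial B$ and the metric property. Notably, no annulus or modulus estimate is required; compactness of $\partial B$ together with the definition of $\mu_M$ and monotonicity of capacity does all the work.
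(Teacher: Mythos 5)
Your proof is correct and follows essentially the same route as the paper: first continuity of $\mu_M$ in the manifold topology near the diagonal via Lemma~\ref{lem:L0}, upgraded by the triangle inequality, giving $\tau_\mu\subseteq\tau$; then the positive minimum of $p\mapsto\mu_M(x,p)$ on the compact sphere $\partial B$ (where Class~II enters), combined with the fact that any continuum joining $x$ to a point outside $B$ must cross $\partial B$, giving $\tau\subseteq\tau_\mu$. The only cosmetic difference is that the bound $\mu_M(x,p)\le Cap_M(C)$ is immediate from the definition of $\mu_M$ as an infimum over continua containing the two points, so no appeal to Lemma~\ref{lem:L4}(1) is needed.
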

\begin{proof} 1. We first prove that the function $\mu_M: M^2\to \R$ is continuous at the diagonal, i.e. if $x_i, y_i$ are sequences in $M$ converging to the same point 
$z\in M$, then $\mu_M(x_i, y_i)\to 0$. Fix an open coordinate  ball $B\subset M$ centered at $z$. Without loss of generality, 
$x_i, y_i\in B$ for all $i$. Then,  
$$
\mu_B(x_i,y_i)\to 0,
$$
see Lemma \ref{lem:L0}. Since $\mu_M(x_i,y_i)\le \mu_B(x_i,y_i)$, we get 
$$
\lim_{i\to\infty} \mu_M(x_i,y_i)= 0= \mu_M(z,z). 
$$ 

2. Let us check continuity of $\mu_M$ at general pairs $(x,y)\in M^2$. Consider sequences $x_i\to x, y_i\to y$ in $M$. 
Then, by the triangle inequality for $\mu_M$ and Part 1 of the proof:
$$
\mu_M(x,y)\le \liminf_{i\to\infty} (\mu_M(x, x_i) + \mu_M(x_i, y_i) + \mu_M(y_i,y))\le   \liminf_{i\to\infty}  \mu_M(x_i, y_i). 
$$ 
Similarly, 
$$
\limsup_{i\to\infty} \mu_M(x_i, y_i)\le \mu_M(x,y). 
$$
It follows that $\mu_M$ is continuous at $(x,y)$. Thus, the manifold topology of $M$ is stronger than the metric topology induced by $\mu_M$. 

3. Let us prove that the metric topology is stronger than the manifold topology. 
Let $B\subset M$ be an open coordinate ball centered at $x\in M$. We will show that there exists $r>0$ such that for every $z\in M- {B}$, $\mu_M(x,z)\ge r$. 
Indeed, since the function $h(y)=\mu_M(x,y)$ is continuous and $S=\partial B$ is compact, the function $h$ has positive minimum $r>0$  on $S$. Take $z\in M- B$ 
and a continuum $C\subset M$ connecting $x$ and $z$. This continuum has to intersect $S$ at some point $y$. Therefore, $r\le \mu(x,y)\le Cap_M(C)$ and, thus,  
$\mu_M(x,z)\ge r$. Proposition follows. 
\end{proof}

We can now finish the proof of the theorem. Consider a closed coordinate ball $\bar{B}\subset M$ which we will identify with the 
closed unit ball $\bar{B}(\mathbf 0, 1)\subset \R^n$. Take two smaller closed  balls, 
$$
\bar{B}_1=\bar{B}(\mathbf 0, 1/4)\subset  \bar{B}_2=\bar{B}(\mathbf 0, 1/2)\subset \bar{B}(\mathbf 0, 1)=\bar{B}.$$
By Corollary \ref{cor:C1}, the complement $N= M - \bar{B}_1$  is metrizable 
(by the metric $\mu_N$). Let $B_2$ be the interior of $\bar{B}_2$; it is an open coordinate ball in $M$.  
Hence, the closed subset $X=M- {B}_2\subset M$ is metrizable as well (by the restriction of 
the metric $\mu_N$). Therefore, $M$ is the union of two closed paracompact subsets: $X$ and $\bar{B}$ (the latter is even compact). 
But a topological space which is the union of finitely many closed paracompact subsets is itself paracompact. 
Paracompactness of $M$ follows.

\bibliographystyle{amsalpha}

\bibliography{references}

\noindent
Department of Mathematics, \\
University of California, Davis,\\ 
 CA 95616, USA \\
 kapovich@math.ucdavis.edu

\end{document}